\newcommand{\wsto}{\overset{\raisebox{-1ex}{\scriptsize $*$}}{\rightharpoondown}}
\newcommand{\N}{{\mathds{N}}}
\newcommand{\R}{{\mathds{R}}}
\newcommand{\eps}{\varepsilon}
\renewcommand{\rho}{\varrho}
\DeclareMathOperator\BV{BV}
\DeclareMathOperator*\esssup{ess\,sup}
\DeclareMathOperator\A{\mathcal A}
\newcommand{\edge}{\hspace{0.1em}\mbox{\LARGE$\llcorner$}\hspace{0.05em}}
\newcommand{\Amu}{\mathcal{A}_{u_o}^{(\mu)}}
\renewcommand\d{\mathrm{d}}%differential
\newcommand\dx{{\,\d x}}
\newcommand\dt{{\,\d t}}
\newcommand\dxt{{\,\d x\d t}}
\newcommand{\dH}{\,\d\mathcal{H}^{n-1}}
\renewcommand\L{\mathrm{L}}%Lebesgue space
\newcommand\Ln{{{\cal L}^n}}%Lebesgue measure
\newcommand\W{\mathrm{W}}%Sobolev space
\newcommand{\Lw}[1]{L^{#1}_{w*}}
\DeclareMathOperator\Div{div}
\newcommand{\xint}[3]{\,{\setbox0=\hbox{$#1{#2#3}{\int}$}
   \vcenter{\hbox{$#2#3$}}\kern-.5\wd0}}
\renewcommand{\iint}{\int\hspace{-0.6em}\int}
\newtheorem{theorem}{Theorem}[section]
\newtheorem{definition}[theorem]{Definition}
\newtheorem{lemma}[theorem]{Lemma}
\theoremstyle{remark}
\newtheorem{remark}[theorem]{Remark}
\numberwithin{equation}{section}
\begin{document}

\title{On the definition of solution to the total variation flow}
\author{Juha Kinnunen and Christoph Scheven}
\maketitle

%\tableofcontents
\abstract{We show that the notions of weak solution to the total
  variation flow based on the Anzellotti pairing and the variational
  inequality coincide under some restrictions on the boundary data.
  The key ingredient in the argument is a duality result for the total
  variation functional, which is based on an approximation of the
  total variation by area-type functionals.}
\section{Introduction}
This paper discusses the total variation flow
 \[
    \partial_tu-\Div\bigg(\dfrac{Du}{|Du|}\bigg)=0
    \quad\mbox{on $\Omega_T=\Omega\times (0,T)$,}
  \]
where $\Omega$ is a bounded domain in $\R^n$ and $T>0$.
For this nonlinear parabolic equation we refer to the monograph by Andreu, Caselles and Mazon \cite{Andreu-Caselles-Mazon:book}.
The total variation flow can be seen as the limiting case of the parabolic $p$-Laplace equation
  \[
    \partial_tu-\Div(|Du|^{p-2}Du)=0, \qquad 1<p<\infty,
  \]
 as $p\to1$.
A Sobolev space is the natural function space in the existence and regularity theories for a weak solution to the parabolic $p$-Laplace equation,
see the monograph by DiBenedetto \cite{DiBe1993}.
The corresponding function space for the total variation flow is functions of bounded variation and in that case the weak derivative of a function is a vector valued Radon measure. 
A standard definition of weak solution to the parabolic $p$-Laplace equation is based on integration by parts, 
but it is not immediately clear what is the corresponding definition of weak solution to the total variation flow.
One possibility is to apply the so-called Anzellotti pairing
\cite{Anzellotti:1984}. This approach has been applied for the total variation flow, for example,  in the monograph by Andreu, Caselles and Maz\'on \cite{Andreu-Caselles-Mazon:book}.

For the parabolic $p$-Laplace  equation, it is also possible to consider solutions to the parabolic variational inequality 
\[
\tfrac1p\iint_{\Omega_T}|\nabla u|^p\dxt
-\iint_{\Omega_T}u\partial_t\varphi\dxt
 \le\tfrac1p\iint_{\Omega_T}|\nabla u-\nabla\varphi|^p\dxt
 \]
 for every $\varphi\in C^\infty_0(\Omega_T)$. 
 The variational approach goes back to Lichnewsky and Temam \cite{LicT78}, who employed an analogous concept in the case 
of the time-dependent minimal surface equation. 
Wieser \cite{Wie87} showed that the variational approach gives the same class of weak solutions as the standard definition.
Moreover, he introduced a more general class of quasiminimizers related to parabolic problems.
The variational inequality related to the total variation flow is of the form
\[
     \int_0^T\|Du(t)\|(\Omega)\dt
     -\iint_{\Omega_T}u\partial_t\varphi\dxt
     \le\int_0^T\|D(u-\varphi)(t)\|(\Omega)\dt
   \]
 for every $\varphi\ C^\infty_0(\Omega_T)$, where the total variation $\|Du(t)\|(\Omega)$ is a Radon measure for almost every $t\in(0,T)$. 
 A distinctive feature is that the variational definition is based on
 total variation instead of weak gradient.

There are advantages in both approaches. 
For example, semigroup theory can be applied for the Anzellotti
pairing and the direct methods in the calculus of variations can be
applied in theory of parabolic variational integrals with linear
growth. 
  Initial and boundary value problems to the total variation flow
  have been studied by Andreu, Ballester, Caselles and Maz{\'o}n \cite{AndBCM01,ABCM} and 
by  Andreu, Caselles, D\'iaz and Maz\'on \cite{Andreu-Caselles-Diaz-Mazon:2002}. They have shown that a unique solution exists to the problem
\[
   \begin{cases}
       \partial_tu-\Div\bigg(\dfrac{Du}{|Du|}\bigg)=0&\mbox{in }\Omega\times(0,T),\\
       u(x,t)=f(x)&\mbox{on }\partial\Omega\times(0,T),\\
       u(x,t)=u_o(x)&\mbox{in }\Omega,
     \end{cases}
\]
where $\Omega$ is a bounded Lipschitz domain in $\R^n$, $f\in L^1(\partial\Omega)$ and $u_0\in L^1(\Omega)$.
The case of homogeneous boundary data $f=0$ is discussed in
\cite{Andreu-Caselles-Diaz-Mazon:2002} and \cite{ABCM} discusses
inhomogeneous time-independent boundary data.
The Neumann problem for the total variation flow has
  been studied in \cite{AndBCM01}.
The concepts of solution discussed in \cite{ABCM} are more general than the concept of variational solution considered in this work.
A variational approach to existence and uniqueness questions has been
discussed by B\"ogelein, Duzaar and Marcellini \cite{BDM}, see also
\cite{BoegelDuzSchev:2016}, and for the
corresponding obstacle problem by B\"ogelein, Duzaar and Scheven \cite{BoegelDuzSchev:2015}.
A necessary and sufficient condition for continuity of a variational solution has been proved by DiBenedetto, Gianazza and Klaus \cite{DiBeGiaKla2017}.
Gianazza and Klaus \cite{GiaKlaus2019} showed that variational solutions to the Cauchy-Dirichlet problem for the total variation flow are obtained as the limit as $p\to1$ of variational solutions to the corresponding problem for the parabolic $p$-Laplace equation.
See also  B\"ogelein, Duzaar, Sch\"atzler and Scheven \cite{BDSS:2019}.

%In contrast with the case of the parabolic $p$-Laplace equation, it is not clear that the notions of weak solution by Anzellotti pairing and variational inequalities coincide.
Our main result in Theorem \ref{thm:main} below shows that the notions of weak solution to the total variation flow based on the Anzellotti pairing and the variational inequality coincide under natural assumptions.
We consider weak solutions to a Cauchy-Dirichlet problem for the total variation flow, which can formally be written as
  \begin{equation}
   \label{Cauchy-Dirichlet}
     \begin{cases}
       \partial_tu-\Div\bigg(\dfrac{Du}{|Du|}\bigg)=0&\mbox{in
                                           }\Omega_T,\\
       u=u_o&\mbox{on }\partial_{\mathcal{P}}\Omega_T.
     \end{cases}
 \end{equation}
% \begin{equation}
%   \label{Cauchy-Dirichlet}
%   \left\{
%     \begin{array}{cl}
%       \partial_tu-\Div\Big(\dfrac{Du}{|Du|}\Big)=0&\mbox{in
%                                           }\Omega_T,\\[2.3ex]
%       u=u_o&\mbox{on }\partial_{\mathcal{P}}\Omega_T.
%     \end{array}
%     \right.
% \end{equation}
 For this problem, we consider the appropriate definitions of weak solution with time-dependent boundary values,
 see Definition \ref{def:Mazon-solution} and Definition \ref{def:var-sol} below.
  It is relatively straight forward to show that a weak solution to the total variation flow is a variational solution.
 This question has been studied in the context of metric measure spaces in \cite{Gorny-Mazon:2021}.
 However, it is much more challenging to prove that a variational solution is a weak solution.
The key ingredient is a duality result for the total variation functional in Theorem \ref{thm:TV-duality}. 
This is based on an approximation of the total variation by area-type functionals, see Theorem \ref{thm:area-duality}.

\subsubsection*{Acknowledgements} The first author would like to thank Heikki Hakkarainen, Panu Lahti and Olli Saari for several 
useful discussions on this topic over the years.

\section{Preliminaries}

\subsection{Functions of bounded variation}

Throughout this article, we consider a bounded Lipschitz domain
$\Omega\subset\R^n$. 
We will
prescribe Dirichlet boundary values on $\partial\Omega$ in form of
\emph{solid boundary values}. In the stationary case, this means that we choose 
an open Lipschitz domain $\Omega^\ast\subset\R^n$ with
$\Omega\Subset\Omega^\ast$, consider the Dirichlet data 
\begin{equation}
  \label{bdry-data-assum}
  u_o\in \W^{1,1}(\Omega^\ast)\cap\L^2(\Omega^\ast),
\end{equation}
and restrict ourselves to functions that agree with $u_o$
almost everywhere with respect to the Lebesgue
measure $\Ln$ on $\Omega^\ast\setminus\Omega$.  
  We point out that in the parabolic setting,
  we will consider time-dependent boundary
data as in \eqref{bdry-data-parabolic}, which satisfies
\eqref{bdry-data-assum} on almost every time slice.
The space $\BV(\Omega^\ast)$ is defined as the space of functions
$u\in L^1(\Omega^\ast)$ for which the distributional derivative $Du$ is
given by a finite vector-valued Radon measure on $\Omega^\ast$. By $\|Du\|$ we
denote the total variation measure of $Du$, which is defined by
\begin{equation*}
  \|Du\|(A):=\sup\bigg\{\sum_{i=1}^\infty |Du(A_i)|\colon
  A_i \mbox{ are pairwise disjoint Borel sets with }
  A=\bigcup_{i=1}^\infty A_i\bigg\}
\end{equation*}
for every Borel set $A\subset\Omega^\ast$, cf. \cite[Def.~1.4]{AmbrosioFuscoPallara}.
For the Dirichlet problem, we consider the function class
\begin{equation*}
  \BV_{u_o}(\Omega):=\{u\in \BV(\Omega^\ast)\colon u=u_o \mbox{
    a.e. in }\Omega^\ast\setminus\Omega\}.
\end{equation*}
For $u\in\BV_{u_o}(\Omega)$, we write $D^au$ and $D^su$ for the
absolutely continuous and the singular part of $Du$ with respect to the Lebesgue
measure $\Ln$ and, moreover, $\nabla u$ for the Radon-Nikodym derivative of
$Du$ with respect to the Lebesgue measure. With this notation, we have
the decomposition
\begin{equation}\label{decomposition-Du}
  Du=D^au+D^su=\nabla u\Ln+D^su
\end{equation}
for every $u\in\BV_{u_o}(\Omega)$. From \cite[Thm. 3.88]{AmbrosioFuscoPallara} we know that on a bounded
Lipschitz domain $\Omega\subset\R^n$,  
there exist bounded inner and outer trace operators
\begin{equation*}
  T_\Omega: \BV(\Omega)\to L^1(\partial\Omega)
  \qquad\mbox{and}\qquad
  T_{\R^n\setminus\overline\Omega}: \BV(\R^n\setminus\overline\Omega)\to L^1(\partial\Omega).
\end{equation*}
With these trace operators, we have the following extension result for
$\BV$-functions. 
\begin{lemma}[{\cite[Cor. 3.89]{AmbrosioFuscoPallara}}]
\label{lem:ambrosio-et-al}
  Let $\Omega\subset\R^n$ be an open set with bounded Lipschitz
  boundary, $u\in \BV(\Omega)$ and
  $v\in\BV(\R^n\setminus\overline\Omega)$. Then the function
  \begin{equation*}
    w(x)=
    \begin{cases}
      u(x),& \mbox{for }x\in\Omega,\\
      v(x),& \mbox{for }x\in\R^n\setminus\overline\Omega,
    \end{cases}
  \end{equation*}
  belongs to $\BV(\R^n)$, and its derivative is given by the measure
  \begin{equation*}
    Dw= Du+Dv+\big(T_{\R^n\setminus\overline\Omega}v-T_\Omega
    u\big)\nu_\Omega\mathcal{H}^{n-1}\edge
    \partial\Omega,
  \end{equation*}
  where $\nu_\Omega$ denotes the generalized outer unit normal to
  $\Omega$. In the above formula, we interpret $Du$ and $Dv$ as
 vector-valued  measures on the entire $\R^n$ that are concentrated in
 $\Omega$ and in $\R^n\setminus\overline\Omega$, 
  respectively.
\end{lemma}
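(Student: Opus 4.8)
The plan is to verify directly that the distributional derivative of $w$ coincides with the candidate measure
\[
  \mu := Du+Dv+\big(T_{\R^n\setminus\overline\Omega}v-T_\Omega u\big)\nu_\Omega\,\mathcal{H}^{n-1}\edge\partial\Omega,
\]
where $Du$ and $Dv$ are read as measures on all of $\R^n$ concentrated on $\Omega$ and on $\R^n\setminus\overline\Omega$. That $w\in L^1(\R^n)$ is immediate, since $u\in L^1(\Omega)$, $v\in L^1(\R^n\setminus\overline\Omega)$ and $\Ln(\partial\Omega)=0$. Moreover $\mu$ is a finite $\R^n$-valued Radon measure: the total variations $\|Du\|(\Omega)$ and $\|Dv\|(\R^n\setminus\overline\Omega)$ are finite, the traces lie in $L^1(\partial\Omega)$, and $\mathcal{H}^{n-1}(\partial\Omega)<\infty$ because $\partial\Omega$ is a bounded Lipschitz boundary. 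Hence it suffices to establish
\[
  \int_{\R^n}w\,\Div\phi\dx=-\int_{\R^n}\phi\cdot\d\mu
  \qquad\text{for all }\phi\in C^1_c(\R^n;\R^n),
\]
since this simultaneously identifies $Dw=\mu$ and shows $w\in\BV(\R^n)$.

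The main tool is the Gauss--Green formula that expresses integration by parts through the trace operators $T_\Omega$ and $T_{\R^n\setminus\overline\Omega}$ introduced above. I would split the left-hand side along $\R^n=\Omega\cup\partial\Omega\cup(\R^n\setminus\overline\Omega)$ and apply this formula on each side. On the bounded Lipschitz domain $\Omega$ it reads
\[
  \int_\Omega u\,\Div\phi\dx=-\int_\Omega\phi\cdot\d Du+\int_{\partial\Omega}(T_\Omega u)\,\phi\cdot\nu_\Omega\dH.
\]
For the exterior piece I would first intersect with a large ball $B_R\supset\spt\phi$ so that the contribution on $\partial B_R$ vanishes, and apply the formula on the Lipschitz domain $(\R^n\setminus\overline\Omega)\cap B_R$, whose outer unit normal along $\partial\Omega$ is $-\nu_\Omega$. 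This gives
\[
  \int_{\R^n\setminus\overline\Omega}v\,\Div\phi\dx=-\int_{\R^n\setminus\overline\Omega}\phi\cdot\d Dv-\int_{\partial\Omega}(T_{\R^n\setminus\overline\Omega}v)\,\phi\cdot\nu_\Omega\dH.
\]

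Adding the two identities, the boundary terms combine into $\int_{\partial\Omega}\big(T_\Omega u-T_{\R^n\setminus\overline\Omega}v\big)\,\phi\cdot\nu_\Omega\dH$, which is precisely the boundary part of $-\int_{\R^n}\phi\cdot\d\mu$, while the two interior integrals reproduce the contributions of $Du$ and $Dv$. This yields the desired identity and completes the argument.

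The main obstacle is not the bookkeeping but securing the Gauss--Green formula with traces in exactly the form used above, particularly on the unbounded exterior domain, and keeping the sign of the outer normal correct on the two sides of $\partial\Omega$. I would deal with the unboundedness by the truncation to $B_R$ indicated above, observing that the trace operators are local and so are unaffected by it, and would obtain the Gauss--Green formula either by invoking the defining property of the trace operators introduced above or, for a self-contained argument, by approximating $u$ and $v$ strictly in $\BV$ by smooth functions sharing the same traces and passing to the limit in the classical divergence theorem.
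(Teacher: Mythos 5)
The paper gives no proof of this lemma; it is quoted verbatim from \cite[Cor.~3.89]{AmbrosioFuscoPallara}, so there is no in-paper argument to compare against. Your proposal is a correct and complete reconstruction of the standard proof behind that reference: testing against $\phi\in C^1_c(\R^n;\R^n)$, applying the Gauss--Green formula with traces separately on $\Omega$ and on the truncated exterior $(\R^n\setminus\overline\Omega)\cap B_R$, and letting the two boundary terms (with opposite orientations of $\nu_\Omega$) combine into the jump term $\big(T_{\R^n\setminus\overline\Omega}v-T_\Omega u\big)\nu_\Omega\mathcal{H}^{n-1}\edge\partial\Omega$. The signs check out, the truncation correctly handles the unboundedness of the exterior domain (the trace operator is local, and $\partial B_R$ carries no boundary term once $\spt\phi\subset B_R$), and the finiteness of $\mu$ follows as you say from $\mathcal{H}^{n-1}(\partial\Omega)<\infty$ and the $L^1(\partial\Omega)$ bounds on the traces. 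The only ingredient you defer to is the Gauss--Green formula itself, which is precisely the defining property of the trace operator on a bounded Lipschitz domain (\cite[Thm.~3.87]{AmbrosioFuscoPallara}), so no gap remains.
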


We apply this lemma with the boundary values $u_o$ as in \eqref{bdry-data-assum} in place of
$v$. This is possible because we can extend the boundary values to a
function $u_o\in W^{1,1}(\R^n)$ without changing the boundary condition. 

\subsection{Parabolic function spaces}\label{sec:parabolic-spaces}\label{sec:spaces}

A map $v\colon [0,T]\to X$ into a Banach space $X$ is
called \emph{Bochner measurable} or \emph{strongly measurable}
if it can be approximated by simple functions
$v_k\colon [0,T]\to X$ in the sense $\|v_k(t)-v(t)\|_X\to0$ for
a.e. $t\in [0,T]$ as $k\to\infty$. 
A simple function is of the form 
\[
v_k(t)=\sum_{i=1}^N v^{(i)}\chi_{E_i}(t)
\] 
for
$v^{(1)},\ldots,v^{(N)}\in X$ and pairwise disjoint
measurable sets $E_1,\ldots,E_N\subset[0,T]$.
For $1\le p\le\infty$, 
we write $L^p(0,T;X)$ for the space of equivalence classes of
Bochner measurable functions
$v\colon [0,T]\to X$ with $\|v(t)\|_X\in L^p([0,T])$. 

For non-separable Banach spaces, the assumption of Bochner measurability
often turns out to be too strong. For maps into non-separable dual spaces $X=X_0'$,
we use the following weaker condition. 
A function $v\colon [0,T]\to X_0'$ 
is called weakly$\ast$-measurable if
the map $[0,T]\ni t\mapsto \langle v(t),\varphi\rangle\in \R$ 
is measurable for every $\varphi\in X_0$, where 
$\langle\cdot,\cdot\rangle$ denotes the dual pairing between
$X_0'$ and $X_0$.
Using this concept of measurability, we
introduce the weak$\ast$-Lebesgue space
\begin{equation}\label{def-weak-Lebesgue}
  \Lw{p}(0,T;X_0'):=
  \bigg\{v\colon [0,T]\to X_0'\ \bigg|\,
  \begin{array}{l}
    v\mbox{ is weakly$*$-measurable with }\\
    t\mapsto \|v(t)\|_{X_0'}\in L^p([0,T])
  \end{array}
  \bigg\}\,,
\end{equation}
with $1\le p\le\infty$.
The weak$\ast$-Lebesgue space with exponent $p=\infty$ naturally occurs in the case of the dual space
$W^{-1,\infty}(\Omega)=[W^{1,1}_0(\Omega)]'$, since 
\begin{equation}\label{parabolic-dual-space}
  \big[L^1(0,T;W^{1,1}_0(\Omega))\big]'
  =
  \Lw\infty(0,T;W^{-1,\infty}(\Omega)),
\end{equation}
cf. \cite[Sect. VII.4]{Theory-of-lifting}.
Moreover, from \cite[Remark 3.12]{AmbrosioFuscoPallara} we know that  
$\BV(\Omega^\ast)$ is a dual space with a separable pre-dual $X_0$,
whose elements take the form $g-\Div G$ with $g\in C^0_0(\Omega^\ast)$ and
$G\in C^0_0(\Omega^\ast,\R^n)$.
  We will consider the Cauchy-Dirichlet problem for the 
  total variation flow with time-dependent boundary data satisfying
  \begin{equation}
    \label{bdry-data-parabolic}
    u_o\in L^1(0,T;W^{1,1}(\Omega))\cap C^0([0,T];L^2(\Omega)).
  \end{equation}
The natural solution space for this problem %  the
% Cauchy-Dirichlet problem for the 
% total variation flow
is the weak$\ast$-Lebesgue space
\begin{equation*}
  \Lw1(0,T;\BV_{u_o}(\Omega))
  :=
  \big\{ u\in \Lw1(0,T;\BV(\Omega^\ast))\,\colon
  u=u_o\mbox{ a.e. in $(\Omega^\ast\setminus\Omega)_T$}
  \big\}.
\end{equation*}
The next lemma states that weak$\ast$-measurability implies measurability of the total variation functional. 
\begin{lemma}\label{lem:TV-measurable}
  For $u\in\Lw1(0,T;\BV_{u_o}(\Omega))$, the total variation 
  $\|Du(t)\|(\overline\Omega)$ depends measurably on $t\in(0,T)$. 
\end{lemma}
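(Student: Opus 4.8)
The plan is to write $\|Du(t)\|(\overline\Omega)$ as a countable combination of suprema and infima of maps of the form $t\mapsto\langle u(t),\phi\rangle$ with $\phi\in X_0$, each of which is measurable by the very definition of weak$\ast$-measurability, and then to use that measurability is preserved under countable suprema and infima.

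First I would recall the dual representation of the total variation: for every open set $U\subseteq\Omega^\ast$ one has
\[
  \|Du\|(U)=\sup\Big\{\textstyle\int_U G\cdot\d Du:\ G\in C^1_c(U,\R^n),\ \|G\|_\infty\le1\Big\},
\]
and for each such field integration by parts gives $\int_U G\cdot\d Du=-\int_{\Omega^\ast}u\,\Div G\dx$. The point is that $-\Div G$ is exactly one of the elements of the separable pre-dual $X_0$ described above, namely the one of the form $g-\Div G$ with $g=0$ and $G\in C^0_0(\Omega^\ast,\R^n)$. Hence the functional $u\mapsto\int_U G\cdot\d Du=\langle u,-\Div G\rangle$ is weak$\ast$-continuous, so for our weakly$\ast$-measurable $u$ the map $t\mapsto\int_U G\cdot\d Du(t)$ is measurable on $(0,T)$.

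Next I would reduce the supremum to a countable family. Since $C^0_0(\Omega^\ast,\R^n)$ is separable, the set $\{G\in C^1_c(U,\R^n):\|G\|_\infty\le1\}$ is separable in the supremum norm, and I would fix a countable dense subset $\{G_j\}_{j\in\N}$. Because $\|Du\|$ is a finite measure, $G\mapsto\int_U G\cdot\d Du$ is continuous in the supremum norm, so the supremum over $\{G_j\}$ already equals $\|Du\|(U)$ for every $u\in\BV(\Omega^\ast)$; thus
\[
  t\mapsto\|Du(t)\|(U)=\sup_{j\in\N}\int_U G_j\cdot\d Du(t)
\]
is measurable as a countable supremum of measurable functions. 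To pass from open sets to the closed set $\overline\Omega$ I would use outer regularity: for $k$ large the sets $U_k:=\{x\in\R^n:\dist(x,\overline\Omega)<1/k\}$ satisfy $U_k\Subset\Omega^\ast$ (because $\Omega\Subset\Omega^\ast$) and decrease to $\overline\Omega$. Since $\|Du(t)\|$ is a finite Radon measure for a.e. $t$, continuity from above gives $\|Du(t)\|(\overline\Omega)=\inf_{k}\|Du(t)\|(U_k)$, a countable infimum of measurable functions, hence measurable.

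The step I expect to be the main obstacle is matching the classical dual definition of the total variation with the concrete description of the pre-dual $X_0$, and checking that a single countable family of test fields computes $\|Du(t)\|(U)$ simultaneously for all $t$. This is exactly where the separability of $C^0_0(\Omega^\ast,\R^n)$ and the finiteness of the measure $\|Du(t)\|$ enter; the remaining reductions to the closed set and the stability under countable operations are routine.
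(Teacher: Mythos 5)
Your proposal is correct and follows essentially the same route as the paper: measurability of $t\mapsto\|Du(t)\|(U)$ for open $U$ is obtained exactly as in the paper, as a countable supremum of the measurable pairings $t\mapsto\int_{\Omega^\ast}u(t)\,\Div G\dx$ over a sup-norm dense countable family of admissible test fields. The only (harmless) difference is the final reduction to the closed set $\overline\Omega$: the paper writes $\|Du(t)\|(\overline\Omega)=\|Du(t)\|(\Omega^\ast)-\int_{\Omega^\ast\setminus\Omega}|\nabla u_o(t)|\dx$ using the constraint $u=u_o$ outside $\Omega$, whereas you use continuity from above of the finite measure along shrinking open neighborhoods of $\overline\Omega$; both steps are valid.
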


\begin{proof}
The total variation of the open set $\Omega^\ast$ is given by 
\begin{equation}\label{def:tot-var}
	\| Du(t)\| (\Omega^\ast)
	:=
	\sup\bigg\{\int_{\Omega^\ast} u(t) \Div\zeta\dx\  \bigg|\ \zeta\in C^1_0(\Omega^\ast, \R^n),\,
	\|\zeta\|_{L^\infty(\Omega^\ast)}\le 1\bigg\},
\end{equation}
cf. \cite[Prop. 3.6]{AmbrosioFuscoPallara}.      
The integrals in the supremum depend measurably on time by definition
of the weak$\ast$-measurability. Since the supremum is taken over a
separable set, the supremum is measurable as well. 
Therefore,
\begin{equation*}
  \|Du(t)\|(\overline\Omega)
  =
  \|Du(t)\|(\Omega^\ast)
  -
  \int_{\Omega^\ast\setminus\Omega}|\nabla u_o(t)|\dx
\end{equation*}
depends measurably on $t\in(0,T)$. 
\end{proof}

\subsection{The area functional}
For a parameter $\mu\ge0$, we consider the area functional
\begin{equation}\label{def:areafcnl}
  \mathcal A^{(\mu)}_{u_o}(u):=\int_{\Omega}\sqrt{\mu^2+|\nabla u|^2}\dx
  +\|D^su\|(\overline\Omega)
\end{equation}
where $u\in\BV_{u_o}(\Omega)$.
The limit case $\mu=0$ corresponds to the total variation functional,
i.e. 
\[
\mathcal A_{u_o}^{(0)}(u)=\|Du\|(\overline\Omega)
\] 
for every $u\in\BV_{u_o}(\Omega)$.
We point out that the functional $\mathcal A^{(\mu)}_{u_o}$ depends on
the prescribed boundary function $u_o$. More precisely, since $\Omega$ is a
bounded Lipschitz domain, Lemma~\ref{lem:ambrosio-et-al} gives the
decomposition 
\begin{equation*}
  Du= Du\edge\Omega+Du_o\edge(\Omega^\ast\setminus\overline\Omega)
  +\big(T_{\R^n\setminus\overline\Omega}u_o-T_\Omega u\big)\nu_\Omega\mathcal{H}^{n-1}\edge
  \partial\Omega 
\end{equation*}
for every $u\in\BV_{u_o}(\Omega)$.
Therefore, the last term in~\eqref{def:areafcnl} can be expressed as 
\begin{align*}
  \|D^su\|(\overline\Omega)
  &=
  \|D^su\|(\Omega)+\|D^su\|(\partial\Omega)\\
  &=
  \|D^su\|(\Omega)+\int_{\partial\Omega}|T_\Omega u-T_{\R^n\setminus\overline\Omega}u_o|\,\d \mathcal{H}^{n-1},
\end{align*}
which implies 
\begin{equation*}
  \mathcal A^{(\mu)}_{u_o}(u)=\int_{\Omega}\sqrt{\mu^2+|\nabla u|^2}\dx
  +\|D^su\|(\Omega)+\int_{\partial\Omega}|T_\Omega u-T_{\R^n\setminus\overline\Omega}u_o|\,\d \mathcal{H}^{n-1}.
\end{equation*}

The following approximation result for $\BV$-functions will be useful for us.

\begin{lemma}[Strict interior approximation]\label{lem:interior-approximation}
  Let $\Omega\subset\R^n$ be a bounded Lipschitz domain and 
  $u_o\in W^{1,1}(\Omega^\ast)\cap L^2(\Omega^\ast)$. For every
  $u\in\BV_{u_o}(\Omega)$ and $\mu\in[0,1]$, there exists a sequence of functions $u_i\in
  u_o+C^\infty_0(\Omega)$, $i\in\N$, with  $u_i\to u$ in $L^2(\Omega)$ and  $\Amu(u_i)\to \Amu(u)$
  as $i\to\infty$.
\end{lemma}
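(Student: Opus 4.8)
The plan is to proceed by relaxation, splitting the statement into a cheap lower bound and a constructive upper bound. Writing the functional in the boundary form derived above,
\[
  \Amu(u)=\int_\Omega\sqrt{\mu^2+|\nabla u|^2}\dx+\|\Ds u\|(\Omega)+\int_{\partial\Omega}|T_\Omega u-T_{\R^n\setminus\overline\Omega}u_o|\dH,
\]
I first observe that for every admissible competitor $w\in u_o+C^\infty_0(\Omega)$ the last two terms vanish, so that $\Amu(w)=\int_\Omega\sqrt{\mu^2+|\nabla w|^2}\dx$. Extending the integrand by the fixed density $\sqrt{\mu^2+|\nabla u_o|^2}$ on $\Omega^\ast\setminus\overline\Omega$ shows that $\Amu$ agrees, up to an additive constant depending only on $u_o$, with the area functional $w\mapsto\int_{\Omega^\ast}\sqrt{\mu^2+|\nabla w|^2}\dx+\|\Ds w\|(\Omega^\ast)$ on $\BV_{u_o}(\Omega)$, which is lower semicontinuous with respect to $L^1(\Omega^\ast)$-convergence by Reshetnyak's theorem. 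Hence it suffices to construct a recovery sequence $u_i\in u_o+C^\infty_0(\Omega)$ with $u_i\to u$ in $L^2(\Omega)$ and $\limsup_i\Amu(u_i)\le\Amu(u)$; the reverse inequality is then automatic.

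For the construction I would first set $v:=u-u_o\in\BV(\Omega^\ast)$. Since $v=0$ a.e.\ on $\Omega^\ast\setminus\Omega$, extending by zero yields $v\in\BV(\R^n)$ with $\spt v\subseteq\overline\Omega$, and $T_\Omega v=T_\Omega u-T_{\R^n\setminus\overline\Omega}u_o$ on $\partial\Omega$ while the outer trace of $v$ vanishes. Using the Lipschitz structure of $\Omega$, I would choose bi-Lipschitz maps $\Phi_\sigma\colon\R^n\to\R^n$, $\sigma\in(0,1)$, generated by the flow of a vector field pointing strictly into $\Omega$ along $\partial\Omega$, so that $\Phi_\sigma\to\mathrm{id}$ in $C^1$ as $\sigma\to0$ and $\Phi_\sigma(\overline\Omega)\Subset\Omega$. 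The contracted function $v_\sigma:=v\circ\Phi_\sigma^{-1}$ is then compactly supported in $\Omega$, vanishes in a collar of $\partial\Omega$, and satisfies $v_\sigma\to v$ in $L^2(\Omega)$ and $\nabla v_\sigma\to\nabla v$ in $L^1(\Omega;\R^n)$. The decisive point is that $\|Dv_\sigma\|(\R^n)\to\|Dv\|(\R^n)$ by the change-of-variables formula, so that, because $\spt v_\sigma\Subset\Omega$,
\[
  \|\Ds v_\sigma\|(\Omega)=\|Dv_\sigma\|(\R^n)-\int_\Omega|\nabla v_\sigma|\dx
  \;\longrightarrow\;
  \|\Ds v\|(\Omega)+\int_{\partial\Omega}|T_\Omega v|\dH,
\]
which is exactly the singular plus boundary contribution to $\Amu(u)$: the trace of $v$ on $\partial\Omega$ is converted into interior singular mass.

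The second step is to mollify while keeping $u_o$ exact near the boundary. For fixed $\sigma$ and a standard mollifier $\eta_\eps$ with $\eps<\tfrac12\dist(\spt v_\sigma,\partial\Omega)$, the function $\psi_{\sigma,\eps}:=v_\sigma*\eta_\eps$ lies in $C^\infty_0(\Omega)$, so $u_o+\psi_{\sigma,\eps}$ is admissible. The area density couples the rough field $\nabla u_o$ with $\nabla\psi_{\sigma,\eps}$, and I would disentangle this using that $z\mapsto\sqrt{\mu^2+|z|^2}$ is $1$-Lipschitz:
\[
  \Big|\int_\Omega\sqrt{\mu^2+|\nabla u_o+\nabla\psi_{\sigma,\eps}|^2}\dx-\int_\Omega\sqrt{\mu^2+|\nabla(u_o*\eta_\eps)+\nabla\psi_{\sigma,\eps}|^2}\dx\Big|\le\int_\Omega|\nabla u_o-\nabla(u_o*\eta_\eps)|\dx\to0
\]
as $\eps\to0$, uniformly in the second argument. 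The second integrand is $\sqrt{\mu^2+|\nabla(w_\sigma*\eta_\eps)|^2}$ with $w_\sigma:=u_o+v_\sigma$, and Jensen's inequality applied to $\nabla(w_\sigma*\eta_\eps)=(Dw_\sigma)*\eta_\eps$ gives the standard mollification bound $\limsup_{\eps\to0}\int_\Omega\sqrt{\mu^2+|\nabla(w_\sigma*\eta_\eps)|^2}\dx\le\int_\Omega\sqrt{\mu^2+|\nabla w_\sigma|^2}\dx+\|\Ds w_\sigma\|(\Omega)$, where $\nabla w_\sigma=\nabla u_o+\nabla v_\sigma$ and $\|\Ds w_\sigma\|(\Omega)=\|\Ds v_\sigma\|(\Omega)$ because $u_o\in W^{1,1}$. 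Letting $\eps\to0$ and then $\sigma\to0$, the bound converges to $\int_\Omega\sqrt{\mu^2+|\nabla u|^2}\dx+\|\Ds u\|(\Omega)+\int_{\partial\Omega}|T_\Omega u-T_{\R^n\setminus\overline\Omega}u_o|\dH=\Amu(u)$, while $u_o+\psi_{\sigma,\eps}\to u$ in $L^2(\Omega)$ by continuity of contraction and mollification on $L^2$. A diagonal choice $\eps=\eps(\sigma)\to0$ then yields the desired sequence, and together with the lower semicontinuity reduction this proves convergence $\Amu(u_i)\to\Amu(u)$.

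I expect the main obstacle to be Step~1: constructing the interior contraction $\Phi_\sigma$ on a general bounded Lipschitz domain and rigorously verifying the strict convergence $\|Dv_\sigma\|(\R^n)\to\|Dv\|(\R^n)$, i.e.\ that the boundary trace of $v$ on $\partial\Omega$ passes, in the limit $\sigma\to0$, exactly into the interior singular mass producing the boundary term $\int_{\partial\Omega}|T_\Omega u-T_{\R^n\setminus\overline\Omega}u_o|\dH$. By contrast, the coupling of the merely $L^1$ field $\nabla u_o$ with the smooth corrections, which at first sight looks delicate, is handled cleanly by the $1$-Lipschitz estimate above.
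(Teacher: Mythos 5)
Your argument is correct in outline, but it takes a genuinely different route from the paper. The paper's proof is two lines: it sets $u_i=u_o+M_{\eps_i}(u-u_o)$ with the mollification operator $M_\eps$ of \cite[Sect.~5]{BDSS:2019}, quotes \cite[Lemma~5.1]{BDSS:2019} for the $L^2$- and $\mathcal A^{(1)}_{u_o}$-convergence, and then transfers the conclusion to all $\mu\in[0,1)$ by the Reshetnyak \emph{continuity} theorem, since $\mathcal A^{(1)}_{u_o}(u_i)\to\mathcal A^{(1)}_{u_o}(u)$ together with $u_i\to u$ amounts to a strict-type convergence of the associated measures. You instead reprove the content of the cited lemma from scratch: inward bi-Lipschitz deformation of $v=u-u_o$ followed by mollification, with the boundary penalty $\int_{\partial\Omega}|T_\Omega u-T_{\R^n\setminus\overline\Omega}u_o|\dH$ recovered as the limit of interior singular mass, and with the $\liminf$ inequality supplied by Reshetnyak \emph{lower semicontinuity} of the functional extended to $\Omega^\ast$. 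The individual steps you describe (vanishing of the singular and trace terms for competitors in $u_o+C^\infty_0(\Omega)$, the $1$-Lipschitz decoupling of $\nabla u_o$ from the smooth correction, the Jensen mollification bound with recession function $|z|$, the trace bookkeeping for $v_\sigma$) are all sound and, as a bonus, work uniformly in $\mu\in[0,1]$, so you do not need the separate continuity transfer. The one step you leave genuinely open --- the existence, on a general bounded Lipschitz domain, of deformations $\Phi_\sigma\to\mathrm{id}$ with $\Phi_\sigma(\overline\Omega)\Subset\Omega$ and the strict convergence $\|Dv_\sigma\|(\R^n)\to\|Dv\|(\R^n)$ --- is precisely the technical content hidden inside the operator $M_\eps$ that the paper cites; it is a known construction (a smooth vector field uniformly transversal to $\partial\Omega$, built from the local cone condition and a partition of unity, whose flow pushes $\overline\Omega$ strictly inward), so this is an omitted known ingredient rather than a flaw in the logic, but as written your proof is not self-contained at exactly the point where the paper chooses to outsource.
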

\begin{proof}
 With the mollification operator $M_\eps$ defined in
 \cite[Sect. 5]{BDSS:2019} we define 
 \[
 u_i=u_o+M_{\eps_i}(u-u_o), 
 \qquad i\in\N,
 \] 
 for some sequence $\eps_i\downarrow0$ as $i\to\infty$. From \cite[Lemma 5.1]{BDSS:2019}
 we infer $u_i\to u$ in $L^2(\Omega)$ and
 $\mathcal A_{u_o}^{(1)}(u_i)\to \mathcal A_{u_o}^{(1)}(u)$ as $i\to\infty$. For the other parameters $\mu\in[0,1)$, the
 asserted convergence follows from
 the Reshetnyak continuity theorem \cite[Thm. 2.39]{AmbrosioFuscoPallara}.
\end{proof}

% \begin{remark}
%   A very similar result has been established in \cite[Thm
%   1.2]{Schmidt:2014} under the much weaker assumption
%   $\mathcal{H}^{n-1}(\partial\Omega)=\mathbf{P}(\Omega)<\infty$
%   instead of Lipschitz regularity of the domain. However, this result
%   only yields the convergence $u_i\to u$ in $L^1(\Omega)$, not in
%   $L^2(\Omega)$.
% \end{remark}

\section{The notion of weak solution by Anzellotti pairing}

For any vector field $z\in L^\infty(\Omega,\R^n)$ with $\Div z\in
L^2(\Omega)$, there exists a uniquely determined outer normal trace $[z,\nu]\in
L^\infty(\partial\Omega)$ with
\[
\big\|[z,\nu]\big\|_{L^\infty(\partial\Omega)}\le\|z\|_{L^\infty(\Omega)}
\] 
and 
\begin{equation}
  \label{property-normal-trace}
  \int_\Omega w\Div z\dx+\int_\Omega z\cdot\nabla w\dx
  =
  \int_{\partial\Omega}[z,\nu]w\dH
\end{equation}
for every $w\in W^{1,1}(\Omega)\cap L^2(\Omega)$, see \cite[Prop. C.4]{Andreu-Caselles-Mazon:book}.
We use the normal trace for the following version of an Anzellotti pairing, which is tailored for the Dirichlet problem.
\begin{definition}
  For any $u\in\BV_{u_o}(\Omega)\cap L^2(\Omega^\ast)$ and
  $z\in L^\infty(\Omega,\R^n)$ with
  $\Div z\in L^2(\Omega)$, we define the Anzellotti pairing of
  $z$ and $Du$ as the distribution
  \begin{equation*}
    (z,Du)_{u_o}(\varphi):=-\int_{\Omega}\Div z\,u\varphi\dx
    -\int_{\Omega}z\cdot\nabla\varphi \,u\dx
    +\int_{\partial\Omega}[z,\nu] u_o\varphi\dH
  \end{equation*}
  for every $\varphi\in C^\infty_0(\Omega^\ast)$.
\end{definition}

It turns out that this distribution is a measure. 

\begin{lemma}\label{lem:anzellotti-bound}
  For any $u\in\BV_{u_o}(\Omega)\cap L^2(\Omega^\ast)$ and
  $z\in L^\infty(\Omega,\R^n)$ with
  $\Div z\in L^2(\Omega)$, the pairing $(z,Du)_{u_o}$ defines a
  Radon measure on $\overline\Omega$, and we have 
\begin{equation}\label{Anzellotti-bound}
  \big|(z,Du)_{u_o}(\overline\Omega)\big|\le \|z\|_{L^\infty(\Omega)}\|Du\|(\overline\Omega).
\end{equation}
\end{lemma}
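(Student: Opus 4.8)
The plan is to establish the estimate first on smooth approximants and then pass to the limit. I would fix $z$ and invoke Lemma~\ref{lem:interior-approximation} with $\mu=0$ to produce a sequence $u_i\in u_o+C^\infty_0(\Omega)$ with $u_i\to u$ in $L^2(\Omega)$ and with the \emph{strict} convergence $\|Du_i\|(\overline\Omega)\to\|Du\|(\overline\Omega)$. Since each $u_i$ agrees with $u_o\in W^{1,1}(\Omega^\ast)$ outside a compact subset of $\Omega$ and $u_o$ has no jump across $\partial\Omega$, the measure $Du_i$ puts no mass on $\partial\Omega$, so that $\int_\Omega|\nabla u_i|\dx=\|Du_i\|(\overline\Omega)$ and $T_\Omega u_i=T_\Omega u_o$ on $\partial\Omega$.

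Next I would compute the pairing on the approximants. Applying the integration-by-parts identity \eqref{property-normal-trace} to the admissible test function $w=u_i\varphi\in W^{1,1}(\Omega)\cap L^2(\Omega)$ and using $\nabla(u_i\varphi)=\varphi\nabla u_i+u_i\nabla\varphi$, the two volume terms in the definition of the pairing can be rewritten so that
\[
(z,Du_i)_{u_o}(\varphi)=\int_{\Omega}z\cdot\nabla u_i\,\varphi\dx-\int_{\partial\Omega}[z,\nu](u_i-u_o)\varphi\dH.
\]
Because $T_\Omega u_i=T_\Omega u_o$, the boundary integral vanishes, leaving the clean identity $(z,Du_i)_{u_o}(\varphi)=\int_\Omega z\cdot\nabla u_i\,\varphi\dx$. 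From this and $\int_\Omega|\nabla u_i|\dx=\|Du_i\|(\overline\Omega)$ I immediately obtain
\[
\big|(z,Du_i)_{u_o}(\varphi)\big|\le\|z\|_{L^\infty(\Omega)}\|\varphi\|_{L^\infty}\,\|Du_i\|(\overline\Omega).
\]

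I would then pass to the limit $i\to\infty$ for fixed $\varphi\in C^\infty_0(\Omega^\ast)$. Since the boundary integrals in the definition of the pairing use the fixed datum $u_o$, the difference $(z,Du_i)_{u_o}(\varphi)-(z,Du)_{u_o}(\varphi)$ reduces to $-\int_\Omega\Div z\,(u_i-u)\varphi\dx-\int_\Omega z\cdot\nabla\varphi\,(u_i-u)\dx$, and both integrals tend to $0$: in the first, $\Div z\,\varphi\in L^2(\Omega)$ pairs with $u_i-u\to0$ in $L^2(\Omega)$; in the second, $z\cdot\nabla\varphi\in L^\infty(\Omega)\subset L^2(\Omega)$ does likewise. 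Combining this convergence with the bound above and the strict convergence of the total variations gives
\[
\big|(z,Du)_{u_o}(\varphi)\big|\le\|z\|_{L^\infty(\Omega)}\|Du\|(\overline\Omega)\,\|\varphi\|_{L^\infty}
\]
for every $\varphi\in C^\infty_0(\Omega^\ast)$. As restrictions of such $\varphi$ are dense in $C(\overline\Omega)$, the Riesz representation theorem upgrades $(z,Du)_{u_o}$ to a signed Radon measure on $\overline\Omega$ of total variation at most $\|z\|_{L^\infty(\Omega)}\|Du\|(\overline\Omega)$, and evaluating it on the compact set $\overline\Omega$ yields \eqref{Anzellotti-bound}.

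The step I expect to be most delicate is the passage to the limit for the norm bound. Lower semicontinuity of the total variation only gives $\|Du\|(\overline\Omega)\le\liminf_i\|Du_i\|(\overline\Omega)$, which is the wrong direction; so the argument genuinely relies on the \emph{strict} convergence $\|Du_i\|(\overline\Omega)\to\|Du\|(\overline\Omega)$ furnished by the strict interior approximation in Lemma~\ref{lem:interior-approximation}, rather than on plain mollification. A secondary point is the cancellation of the boundary terms, which hinges on the approximants sharing the trace of $u_o$ on $\partial\Omega$.
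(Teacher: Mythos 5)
Your proposal is correct and follows essentially the same route as the paper: strict interior approximation by $u_i\in u_o+C^\infty_0(\Omega)$ from Lemma~\ref{lem:interior-approximation}, the integration-by-parts identity \eqref{property-normal-trace} with $w=u_i\varphi$ to reduce the pairing to $\int_\Omega z\cdot\nabla u_i\,\varphi\dx$, and passage to the limit using the $L^2$-convergence together with the strict convergence of the total variations. Your closing remark correctly identifies the role of strict (rather than merely lower semicontinuous) convergence, which is exactly the point the paper's proof relies on.
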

Before giving the proof, we state a variant of the preceding
estimate that involves the area
functional instead of the total variation. To this end, we note that   
for any vectors $z,v\in\R^n$ with $|z|\le 1$ and $\mu>0$, we have the Fenchel-type inequality
\begin{equation}\label{Fenchel-pointwise}
  |z\cdot v|\le \sqrt{\mu^2+|v|^2}-\mu\sqrt{1-|z|^2}.
\end{equation}
This inequality can be verified by a straightforward calculation or by noting that
$f_\mu^*(z)=-\mu\sqrt{1-|z|^2}$ is the convex conjugate function of
$f_\mu(v)=\sqrt{\mu^2+|v|^2}$ and recalling the general Fenchel
inequality $|z\cdot v|\le f_\mu(z)+f_\mu^\ast(v)$. We note that equality
in~\eqref{Fenchel-pointwise} holds if and only if
$z=v(\mu^2+|v|^2)^{-1/2}$. The Fenchel-type estimate above leads
to an estimate for the Anzellotti pairing.

\begin{lemma}\label{lem:Fenchel-Anzellotti}
  For every $\mu\in(0,1]$, every
  $u\in\BV_{u_o}(\Omega)\cap L^2(\Omega^\ast)$ and every $z\in L^\infty(\Omega,\R^n)$
  with $\|z\|_{L^\infty}\le1$ and $\Div z\in L^2(\Omega)$ we have 
  \begin{equation}\label{Anzellotti-Fenchel-bound}
    \big|(z,Du)_{u_o}(\overline\Omega)\big|
    \le
    \A_{u_o}^{(\mu)}(u)-\mu\int_\Omega \sqrt{1-|z|^2}\dx.
  \end{equation}
\end{lemma}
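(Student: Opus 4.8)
The strategy is to reduce to the smooth functions produced by Lemma~\ref{lem:interior-approximation}, on which the pairing $(z,Du)_{u_o}$ is a classical integral, to prove the estimate there by the pointwise Fenchel inequality~\eqref{Fenchel-pointwise}, and then to pass to the limit.

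First I would record an explicit formula for the total mass. Since $(z,Du)_{u_o}$ is a Radon measure on $\overline\Omega$ by Lemma~\ref{lem:anzellotti-bound}, choosing $\varphi\in C^\infty_0(\Omega^\ast)$ with $\varphi\equiv1$ on a neighbourhood of $\overline\Omega$ gives $(z,Du)_{u_o}(\overline\Omega)=(z,Du)_{u_o}(\varphi)$; as $\nabla\varphi\equiv0$ on $\Omega$, the definition collapses to
\[
  (z,Du)_{u_o}(\overline\Omega)=-\int_\Omega u\,\Div z\dx+\int_{\partial\Omega}[z,\nu]\,u_o\dH.
\]
For $u\in u_o+C^\infty_0(\Omega)$ we have $u\in W^{1,1}(\Omega)\cap L^2(\Omega)$, so the normal trace identity~\eqref{property-normal-trace} applied with $w=u$ turns this into
\[
  (z,Du)_{u_o}(\overline\Omega)=\int_\Omega z\cdot\nabla u\dx+\int_{\partial\Omega}[z,\nu]\,(u_o-T_\Omega u)\dH.
\]
Because such $u$ coincides with $u_o$ near $\partial\Omega$ and $u_o\in W^{1,1}(\Omega^\ast)$ has no jump across $\partial\Omega$, the inner trace $T_\Omega u$ equals the boundary value $u_o$ and the boundary integral vanishes. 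For the same reason $D^su=0$ on $\overline\Omega$, so $\A_{u_o}^{(\mu)}(u)=\int_\Omega\sqrt{\mu^2+|\nabla u|^2}\dx$.

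For such smooth $u$ the estimate is now immediate: applying the pointwise inequality~\eqref{Fenchel-pointwise} with $v=\nabla u(x)$, which is legitimate since $\|z\|_{L^\infty}\le1$, and integrating over $\Omega$ yields
\[
  \big|(z,Du)_{u_o}(\overline\Omega)\big|\le\int_\Omega|z\cdot\nabla u|\dx\le\int_\Omega\sqrt{\mu^2+|\nabla u|^2}\dx-\mu\int_\Omega\sqrt{1-|z|^2}\dx,
\]
which is exactly~\eqref{Anzellotti-Fenchel-bound}.

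It remains to remove the smoothness. Given $u\in\BV_{u_o}(\Omega)\cap L^2(\Omega^\ast)$, let $u_i\in u_o+C^\infty_0(\Omega)$ be the approximating sequence of Lemma~\ref{lem:interior-approximation}, so that $u_i\to u$ in $L^2(\Omega)$ and $\A_{u_o}^{(\mu)}(u_i)\to\A_{u_o}^{(\mu)}(u)$. The right-hand side of the inequality for $u_i$ then converges to that of~\eqref{Anzellotti-Fenchel-bound}, the term $\mu\int_\Omega\sqrt{1-|z|^2}\dx$ being independent of $u$. For the left-hand side I use the explicit formula above: its boundary integral does not depend on $u$, while $\int_\Omega u_i\,\Div z\dx\to\int_\Omega u\,\Div z\dx$ since $\Div z\in L^2(\Omega)$ and $u_i\to u$ in $L^2(\Omega)$; hence $(z,Du_i)_{u_o}(\overline\Omega)\to(z,Du)_{u_o}(\overline\Omega)$. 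Passing to the limit in the smooth estimate gives the claim. The only genuinely delicate point is interchanging this limit with the evaluation of the pairing on $\overline\Omega$, which is exactly why I work with the $L^2$-continuous explicit formula rather than with weak-$\ast$ convergence of the measures; everything else follows directly from Lemma~\ref{lem:interior-approximation} and~\eqref{Fenchel-pointwise}.
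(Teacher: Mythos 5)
Your proposal is correct and follows essentially the same route as the paper: strict interior approximation by $u_i\in u_o+C^\infty_0(\Omega)$ from Lemma~\ref{lem:interior-approximation}, the normal trace identity~\eqref{property-normal-trace} to rewrite the pairing as $\int_\Omega z\cdot\nabla u_i\dx$ for such functions, the pointwise Fenchel inequality~\eqref{Fenchel-pointwise}, and $L^2$-continuity of the pairing in $u$ to pass to the limit. The only difference is organizational (you prove the inequality for each smooth $u_i$ and then take limits on both sides, whereas the paper first identifies $(z,Du)_{u_o}(\varphi)$ as the limit of $\int_\Omega z\cdot\nabla u_i\,\varphi\dx$ and then estimates), which changes nothing of substance.
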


\begin{proof}[Proof of Lemmas~\ref{lem:anzellotti-bound} and~\ref{lem:Fenchel-Anzellotti}]
  Let $u\in\BV_{u_o}(\Omega)\cap L^2(\Omega^\ast)$. 
  Lemma~\ref{lem:interior-approximation} provides us with a sequence
  of approximating functions
  $u_i\in u_o+C^\infty_0(\Omega)$, $i\in\N$, that converges strictly to $u$, i.e.  $u_i\to u$ in $L^2(\Omega)$ and
  \begin{equation}\label{strict-convergence}
    \|\nabla u_i\|_{L^1(\Omega)}\to \|Du\|(\overline\Omega)
  \end{equation}
 as $i\to\infty$.
  For every test function $\varphi\in
  C^\infty_0(\Omega^\ast)$ we obtain
  \begin{align}\label{approximation-anzellotti}
    (z,Du)_{u_o}(\varphi)
    &=
      -\int_{\Omega}\Div z\,u\varphi\dx
      -\int_\Omega z\cdot\nabla\varphi\, u\dx
      +\int_{\partial\Omega}[z,\nu] u_o\varphi\dH\\\nonumber
    &=
      \lim_{i\to\infty}\bigg(-\int_{\Omega}\Div z\,u_i\varphi\dx
       -\int_\Omega z\cdot\nabla\varphi\, u_i\dx\bigg)
      +\int_{\partial\Omega}[z,\nu] u_o\varphi\dH\\\nonumber
    &=
      \lim_{i\to\infty}\int_{\Omega} z\cdot\nabla u_i\,\varphi\dx,
  \end{align}
  where in the last step, we applied~\eqref{property-normal-trace} with $w=u_i\varphi$ and the fact $u_i=u_o$ on $\partial\Omega$ in the sense
  of traces. By~\eqref{strict-convergence}, we deduce
  \begin{align*}
    |(z,Du)_{u_o}(\varphi)|
    &\le
     \lim_{i\to\infty}\|\nabla
      u_i\|_{L^1(\Omega)}\|z\|_{L^\infty(\Omega)}\|\varphi\|_{C^0(\overline\Omega)}\\
      &=\|Du\|(\overline\Omega)\|z\|_{L^\infty(\Omega)}\|\varphi\|_{C^0(\overline\Omega)}.
  \end{align*}
  This implies that $(z,Du)_{u_o}$ defines a measure on
  $\overline\Omega$ that satisfies~\eqref{Anzellotti-bound}. This
  completes the proof of Lemma~\ref{lem:anzellotti-bound}. For the
  proof of Lemma~\ref{lem:Fenchel-Anzellotti}, we observe that
  according to Lemma~\ref{lem:interior-approximation}, the sequence of
  approximating functions $u_i\in u_o+C^\infty_0(\Omega)$, $i\in\N$, has the property 
   $ \Amu(u_i)\to\Amu(u)$ as $i\to\infty$ for every $\mu\in(0,1]$. 
  We use \eqref{approximation-anzellotti} with a cut-off
  function $\varphi\in C^\infty_0(\Omega^\ast)$ with $\varphi\equiv1$
  on $\Omega$.  Estimating the last integrand in
  \eqref{approximation-anzellotti} 
  by means of~\eqref{Fenchel-pointwise},
  we arrive at 
  \begin{align*}
    \big|(z,Du)_{u_o}(\overline\Omega)\big|
    &=\big|(z,Du)_{u_o}(\varphi)\big|\\
    &\le
    \lim_{i\to\infty}\int_{\Omega} \sqrt{\mu^2+|\nabla u_i|^2}\dx
    -
    \mu\int_{\Omega} \sqrt{1-|z|^2}\dx\\
    &=
    \Amu(u)
    -
    \mu\int_{\Omega} \sqrt{1-|z|^2}\dx.
  \end{align*}
  This completes the proof of Lemma~\ref{lem:Fenchel-Anzellotti}.
\end{proof}

Next, we state an elementary identity for the Anzellotti pairings that
will frequently be used in the proofs that follow. 
\begin{lemma}\label{lem:comparison-pairings}
  Let $z\in L^\infty(\Omega,\R^n)$ be with $\Div z\in
  L^2(\Omega)$ and $v\in\BV_{u_o}(\Omega)\cap L^2(\Omega)$. Then we
  have 
  \begin{equation*}
    (z,Dv)_{u_o}(\overline\Omega)
    -
    \int_\Omega z\cdot\nabla u_o\dx
    =
    \int_\Omega \Div z\, (u_o-v) \dx.
  \end{equation*}
\end{lemma}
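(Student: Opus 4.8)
The plan is to compute the total mass $(z,Dv)_{u_o}(\overline\Omega)$ directly from the definition of the pairing. Since Lemma~\ref{lem:anzellotti-bound} already guarantees that $(z,Dv)_{u_o}$ is a Radon measure on $\overline\Omega$, its mass can be recovered by testing the defining distribution against any cut-off function $\varphi\in C^\infty_0(\Omega^\ast)$ with $\varphi\equiv1$ on a neighborhood of $\overline\Omega$; such a $\varphi$ exists because $\Omega\Subset\Omega^\ast$. This is exactly the device already used in the proof of Lemma~\ref{lem:Fenchel-Anzellotti}, so I would invoke it in the same way and write $(z,Dv)_{u_o}(\overline\Omega)=(z,Dv)_{u_o}(\varphi)$.

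With this choice of $\varphi$, the key simplification is that $\nabla\varphi\equiv0$ on $\Omega$, so the middle term $-\int_\Omega z\cdot\nabla\varphi\,v\dx$ in the definition of the pairing vanishes outright. What remains is
\[
  (z,Dv)_{u_o}(\overline\Omega)
  =
  -\int_\Omega \Div z\,v\dx
  +\int_{\partial\Omega}[z,\nu]\,u_o\dH,
\]
using $\varphi\equiv1$ on $\overline\Omega$ (hence also on $\partial\Omega$) in the remaining two terms. The only thing to check here is the elementary point that the full mass of the measure is captured by a cut-off equal to $1$ near the closed set $\overline\Omega$, which I would regard as the main (and very mild) obstacle.

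To finish, I would eliminate the boundary integral in favor of interior integrals by applying the normal-trace integration-by-parts formula~\eqref{property-normal-trace} to the function $w=u_o$, which is admissible since $u_o\in W^{1,1}(\Omega)\cap L^2(\Omega)$ by~\eqref{bdry-data-assum}. This gives
\[
  \int_{\partial\Omega}[z,\nu]\,u_o\dH
  =
  \int_\Omega u_o\,\Div z\dx+\int_\Omega z\cdot\nabla u_o\dx .
\]
Substituting this into the previous display and moving the term $\int_\Omega z\cdot\nabla u_o\dx$ to the left-hand side collapses the two divergence integrals into $\int_\Omega\Div z\,(u_o-v)\dx$, which is precisely the claimed identity. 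No compactness or approximation argument is needed beyond the existence of the cut-off and the validity of~\eqref{property-normal-trace}.
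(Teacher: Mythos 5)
Your proposal is correct and follows essentially the same route as the paper: evaluate the pairing on $\overline\Omega$ via a cut-off equal to $1$ near $\overline\Omega$ (so the $\nabla\varphi$ term drops out), then apply the normal-trace formula~\eqref{property-normal-trace} with $w=u_o$ and rearrange. The paper merely suppresses the explicit mention of the cut-off, which you spell out; otherwise the two arguments are identical.
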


\begin{proof}
   We use first the definition of the Anzellotti
   pairing and then property~\eqref{property-normal-trace}
   of the normal trace with
   $w=u_o$ in order to have
   \begin{align*}
     (z,Dv)_{u_o}(\overline\Omega)
     &=
       -\int_\Omega v\Div z \dx
       +\int_{\partial\Omega}[z,\nu]u_o\dH\\
     &=
       -\int_\Omega \Div z\,(v-u_o) \dx
       -\int_\Omega u_o\Div z \dx
       +\int_{\partial\Omega}[z,\nu]u_o\dH
       \\
     &=
       -\int_\Omega \Div z\,(v-u_o) \dx
       +
       \int_\Omega z\cdot\nabla u_o\dx.   \qedhere  
   \end{align*}
\end{proof}

We apply the following definition of weak solution.

\begin{definition}[Weak solution]
  \label{def:Mazon-solution}
   Assume that $u_o\in L^1(0,T;W^{1,1}(\Omega))\cap
   C^0([0,T];L^2(\Omega))$.
   We say that a function $u\in \Lw1(0,T;BV_{u_o}(\Omega))\cap
   C^0([0,T];L^2(\Omega))$ with $\partial_tu\in L^2(\Omega_T)$ is 
   a weak solution of~\eqref{Cauchy-Dirichlet}
   if $u(0)=u_o(0)$ and if there exists a vector field $z\in L^\infty(\Omega_T,\R^n)$ with
   $\|z\|_{L^\infty}\le1$, $\Div z=\partial_tu$ in $\Omega_T$
  in the sense of distributions and
  \begin{equation}\label{mazon-TV}
    \|Du(t)\|(\overline\Omega)
    +\int_{\Omega\times\{t\}}\partial_tu(u-v)\dx
    =
    (z(t),Dv)_{u_o}(\overline\Omega)
  \end{equation}
  for every $v\in\BV_{u_o(t)}(\Omega)\cap L^2(\Omega)$ and a.e. $t\in(0,T)$.
\end{definition}

We recall an equivalent way to formulate the preceding concept
of solution that has already been observed in
\cite[Thm. 1]{Andreu-Caselles-Diaz-Mazon:2002}. 

\begin{lemma}\label{lem:Mazon-2}
   A map $u\in \Lw1(0,T;BV_{u_o}(\Omega))\cap
   C^0([0,T];L^2(\Omega))$ with $\partial_tu\in L^2(\Omega_T)$ and
   $u(0)=u_o(0)$ is 
   a weak solution of~\eqref{Cauchy-Dirichlet} in the sense of Definition~\ref{def:Mazon-solution} 
   if and only if there exists a vector field
   $z\in L^\infty(\Omega_T,\R^n)$ with
   \begin{equation}\label{1-harmonic-flow}
     \Div z=\partial_tu \qquad\mbox{in $\Omega_T$}
  \end{equation}
  in the sense of distributions, for which 
  \begin{equation}\label{maximal-pairing}
    \|z\|_{L^\infty(\Omega_T)}\le 1
    \qquad\mbox{and}\qquad
    \big(z(t),Du(t)\big)_{u_o}(\overline\Omega)
    =
    \|Du(t)\|(\overline\Omega)
  \end{equation}
  hold true for a.e. $t\in(0,T)$.
\end{lemma}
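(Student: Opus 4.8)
The plan is to prove the equivalence by showing that the two characterizations differ only by a rearrangement of the defining equation \eqref{mazon-TV}. The guiding observation is that the right-hand side of \eqref{mazon-TV} can be split using Lemma~\ref{lem:comparison-pairings} applied to both $v$ and the solution $u$ itself. First I would apply Lemma~\ref{lem:comparison-pairings} with the slice $z(t)$ to the generic comparison function $v$ and separately to $u(t)$, obtaining for a.e.\ $t$ the two identities
\begin{align*}
  (z(t),Dv)_{u_o}(\overline\Omega)-\int_\Omega z(t)\cdot\nabla u_o(t)\dx
  &=\int_\Omega \Div z(t)\,(u_o(t)-v)\dx,\\
  (z(t),Du(t))_{u_o}(\overline\Omega)-\int_\Omega z(t)\cdot\nabla u_o(t)\dx
  &=\int_\Omega \Div z(t)\,(u_o(t)-u(t))\dx.
\end{align*}
Subtracting these and using $\Div z=\partial_t u$ gives
\[
  (z(t),Dv)_{u_o}(\overline\Omega)-(z(t),Du(t))_{u_o}(\overline\Omega)
  =\int_\Omega \partial_t u\,(u(t)-v)\dx,
\]
which is precisely the mechanism that converts the inner-product term $\int_{\Omega\times\{t\}}\partial_t u(u-v)\dx$ in \eqref{mazon-TV} into the pairing $(z(t),Du(t))_{u_o}(\overline\Omega)$.

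For the forward implication, I would assume $u$ is a weak solution in the sense of Definition~\ref{def:Mazon-solution} and fix the admissible choice $v=u(t)\in\BV_{u_o(t)}(\Omega)\cap L^2(\Omega)$ in \eqref{mazon-TV}. This makes the middle integral vanish and yields directly the maximality identity $(z(t),Du(t))_{u_o}(\overline\Omega)=\|Du(t)\|(\overline\Omega)$ in \eqref{maximal-pairing}, while the bound $\|z\|_{L^\infty}\le1$ and \eqref{1-harmonic-flow} are inherited verbatim from the definition. Thus the forward direction is essentially immediate once one is allowed to test against $u(t)$ itself.

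For the reverse implication, I would start from \eqref{1-harmonic-flow} and \eqref{maximal-pairing} and recover \eqref{mazon-TV} for an arbitrary $v\in\BV_{u_o(t)}(\Omega)\cap L^2(\Omega)$. Combining the subtraction identity above with the maximality $(z(t),Du(t))_{u_o}(\overline\Omega)=\|Du(t)\|(\overline\Omega)$ gives
\[
  (z(t),Dv)_{u_o}(\overline\Omega)
  =\|Du(t)\|(\overline\Omega)+\int_\Omega \partial_t u\,(u(t)-v)\dx,
\]
which is exactly \eqref{mazon-TV} after moving the integral to the left. The remaining structural hypotheses on $u$ and the initial condition $u(0)=u_o(0)$ are common to both formulations and require no argument.

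The only genuinely delicate point is the applicability of Lemma~\ref{lem:comparison-pairings} on almost every time slice: one must check that for a.e.\ $t$ the slice $u(t)$ lies in $\BV_{u_o(t)}(\Omega)\cap L^2(\Omega)$ and the slice $z(t)$ satisfies $z(t)\in L^\infty(\Omega,\R^n)$ with $\Div z(t)\in L^2(\Omega)$, so that the slicewise pairing is well defined. This follows from the membership $u\in\Lw1(0,T;\BV_{u_o}(\Omega))\cap C^0([0,T];L^2(\Omega))$ together with $\partial_t u\in L^2(\Omega_T)$, since Fubini's theorem gives $\partial_t u(\cdot,t)=\Div z(t)\in L^2(\Omega)$ for a.e.\ $t$; I would record this measurability and slicewise regularity explicitly, as it is the one step where the weak$\ast$ framework of Section~\ref{sec:parabolic-spaces} must be invoked rather than taken for granted.
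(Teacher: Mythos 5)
Your proposal is correct and follows essentially the same route as the paper: the forward direction by testing \eqref{mazon-TV} with $v=u(t)$, and the reverse direction by applying Lemma~\ref{lem:comparison-pairings} twice (to $v$ and to $u(t)$) and invoking \eqref{maximal-pairing}; your subtraction of the two identities is just a cosmetic rearrangement of the paper's chain of equalities. The remark on slicewise well-definedness is a reasonable extra precaution that the paper leaves implicit.
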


\begin{remark}
  The condition~\eqref{maximal-pairing} for the vector field $z$ can
  be interpreted as an analogue of the identity $z=\frac{Du}{|Du|}$
  for $\BV$-functions. In this sense, equation~\eqref{1-harmonic-flow}
  is the generalization of the differential equation
  \eqref{Cauchy-Dirichlet}$_1$ to the $\BV$-setting.
\end{remark}

\begin{proof}[Proof of Lemma~\ref{lem:Mazon-2}]
   If  $u$ is a weak solution of~\eqref{Cauchy-Dirichlet} in the sense
   of Definition~\ref{def:Mazon-solution}, we simply choose $v=u(t)$
   in~\eqref{mazon-TV} to deduce~\eqref{maximal-pairing}.

   For the other direction, assume that $u\in \Lw1(0,T;BV_{u_o}(\Omega))\cap
   C^0([0,T];L^2(\Omega))$, with $\partial_tu\in L^2(\Omega_T)$, and
   $u(0)=u_o(0)$ 
   and that there exists a vector field
   $z\in L^\infty(\Omega_T,\R^n)$ with the properties in~\eqref{1-harmonic-flow} 
 %  $\Div z=\partial_tu$ in $\Omega_T$
   and~\eqref{maximal-pairing}. For
   $v\in\BV_{u_o(t)}(\Omega)\cap L^2(\Omega)$, we apply
   Lemma~\ref{lem:comparison-pairings}, once with $v$ and once with
   $u(t)$, to obtain
   \begin{align*}
      \big(z(t),Dv\big)_{u_o}(\overline\Omega)
      &=
      \int_\Omega z(t)\cdot\nabla u_o(t)\dx
      +
      \int_{\Omega\times\{t\}} \partial_tu (u_o-v) \dx\\
      &=
      \big(z(t),Du(t)\big)_{u_o}(\overline\Omega)
      +
        \int_{\Omega\times\{t\}} \partial_tu (u-v) \dx\\
      &=
        \|Du(t)\|(\overline\Omega)
      +
        \int_{\Omega\times\{t\}} \partial_tu (u-v) \dx,
   \end{align*}
   for a.e. $t\in(0,T)$. 
   In the last line, we used~\eqref{maximal-pairing}. This proves that $u$ is a
   solution in the sense of Definition~\ref{def:Mazon-solution}.
\end{proof}

\section{The concept of variational solution}

The following notion of solution of the Cauchy-Dirichlet problem~\eqref{Cauchy-Dirichlet} is
based on the variational approach by Lichnewsky and Temam \cite{LicT78}.

\begin{definition}[Variational solution]\label{def:var-sol}
  Assume that the initial and
  boundary values satisfy $u_o\in L^1(0,T;W^{1,1}(\Omega))\cap
   C^0([0,T];L^2(\Omega))$.
   A function $u\in \Lw1(0,T;BV_{u_o}(\Omega))\cap  C^0([0,T];L^2(\Omega))$
   is called a variational solution of~\eqref{Cauchy-Dirichlet} if
   \begin{align}\label{var-sol-TVF}
     \int_0^T\|Du(t)\|(\overline\Omega)\dt
     &\le
       \iint_{\Omega_T}\partial_tv(v-u)\dxt
       +\int_0^T\|Dv(t)\|(\overline\Omega)\dt\\\nonumber
     &\quad
        -\tfrac12\int_{\Omega}|v(T)-u(T)|^2\dx
        +\tfrac12\int_{\Omega}|v(0)-u_o(0)|^2\dx
   \end{align}
   holds true for every $v\in\Lw1(0,T;\BV_{u_o}(\Omega))\cap
   C^0([0,T];L^2(\Omega))$ with $\partial_tv\in L^2(\Omega_T)$.
 \end{definition}

In the following, we will consider variational solutions with the
additional property $\partial_tu\in L^2(\Omega_T)$, as it is required
in the notion of weak solution in the sense of Definition~\ref{def:Mazon-solution}. In this case, the
variational inequality can also be considered separately on the time
slices.

\begin{lemma}\label{lem:var-sol-slicewise}
  Assume that $u_o\in L^1(0,T;W^{1,1}(\Omega))\cap C^0([0,T];L^2(\Omega))$.
   A function $u\in \Lw1(0,T;BV_{u_o}(\Omega))\cap C^0([0,T];L^2(\Omega))$,
   with $\partial_tu\in L^2(\Omega_T)$,
   is a variational solution of~\eqref{Cauchy-Dirichlet} in the sense of
   Definition~\ref{def:var-sol} if and only if
   \begin{equation}\label{var-sol-TVF-slicewise}
     \|Du(t)\|(\overline\Omega)
     \le
     \int_{\Omega\times\{t\}}\partial_tu(v-u)\dx
     +\|Dv\|(\overline\Omega)
   \end{equation}
  for a.e. $t\in(0,T)$ and every $v\in\BV_{u_o(t)}(\Omega)\cap
   L^2(\Omega)$ and if $u$ attains the initial values in the sense 
   \begin{equation}\label{initial values}
     u(0)=u_o(0)\qquad\mbox{in }L^2(\Omega).
   \end{equation}
 \end{lemma}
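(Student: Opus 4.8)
I would prove the two implications of the equivalence separately.

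\emph{Easy implication (slice-wise plus initial values $\Rightarrow$ variational).} Given any competitor $v$ as in Definition~\ref{def:var-sol} with $\partial_t v\in L^2(\Omega_T)$, for a.e.\ $t$ the slice $v(t)$ lies in $\BV_{u_o(t)}(\Omega)\cap L^2(\Omega)$, so I insert $v(t)$ into \eqref{var-sol-TVF-slicewise} and integrate over $(0,T)$, obtaining $\int_0^T\|Du(t)\|(\overline\Omega)\dt\le\iint_{\Omega_T}\partial_t u\,(v-u)\dxt+\int_0^T\|Dv(t)\|(\overline\Omega)\dt$. It then remains to trade the term carrying $\partial_t u$ for the one carrying $\partial_t v$ in \eqref{var-sol-TVF}: writing $w=v-u\in C^0([0,T];L^2(\Omega))$ with $\partial_t w\in L^2(\Omega_T)$ and using the absolute continuity of $t\mapsto\tfrac12\|w(t)\|_{L^2}^2$, one has $\iint_{\Omega_T}\partial_t u\,(v-u)\dxt=\iint_{\Omega_T}\partial_t v\,(v-u)\dxt-\tfrac12\|v(T)-u(T)\|_{L^2}^2+\tfrac12\|v(0)-u(0)\|_{L^2}^2$, and \eqref{initial values} replaces $u(0)$ by $u_o(0)$, giving exactly \eqref{var-sol-TVF}.

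\emph{Converse, initial values first.} Every test function I use is of the admissible form $v=u+\alpha\,\psi$ with a fixed $\psi\in\BV(\Omega^\ast)\cap L^2(\Omega^\ast)$ that vanishes a.e.\ on $\Omega^\ast\setminus\Omega$ and a Lipschitz time cut-off $\alpha$; then $v\in\Lw1(0,T;\BV_{u_o}(\Omega))\cap C^0([0,T];L^2)$ and $\partial_t v=\partial_t u+\alpha'\psi\in L^2(\Omega_T)$, so $v$ is admissible in \eqref{var-sol-TVF} and, crucially, no time derivative of $u_o$ ever appears. For \eqref{initial values} I take $\alpha=\beta_h$ with $\beta_h(0)=1$, affine and decreasing to $0$ on $[0,h]$, and $\beta_h\equiv0$ afterwards. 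Inserting $v=u+\beta_h\psi$ into \eqref{var-sol-TVF}, the term $\tfrac12\|v(T)-u(T)\|^2$ drops; as $h\downarrow0$ the part of $\iint\partial_t v\,(v-u)$ carrying $\partial_t u$ tends to $0$ (since $\partial_t u\in L^1(0,T;L^2)$) while the self-interaction integrates the derivative of $\beta_h^2$ and leaves $-\tfrac12\|\psi\|_{L^2}^2$, and the total-variation difference is $O(h)$. The limit reads $0\le\tfrac12\|a\|_{L^2}^2+\langle a,\psi\rangle$ with $a:=u(0)-u_o(0)$. As this holds for all such $\psi$, which are $L^2$-dense among functions vanishing on $\Omega^\ast\setminus\Omega$, letting $\psi\to-a$ forces $\|a\|_{L^2}=0$, i.e.\ \eqref{initial values}.

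\emph{Converse, slice-wise inequality for a single competitor.} With \eqref{initial values} available, I test \eqref{var-sol-TVF} with $v=u+\alpha_h\psi$, where $\alpha_h$ is a symmetric triangular bump of height $1$ at an interior time $t_0$ with support $[t_0-h,t_0+h]$. Both boundary terms vanish, the self-interaction $\iint\alpha_h\alpha_h'\,|\psi|^2$ vanishes, and convexity of $w\mapsto\|Dw\|(\overline\Omega)$ yields $\|D(u(t)+\alpha_h(t)\psi)\|(\overline\Omega)-\|Du(t)\|(\overline\Omega)\le\alpha_h(t)\bigl(\|D(u(t)+\psi)\|(\overline\Omega)-\|Du(t)\|(\overline\Omega)\bigr)$. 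Thus \eqref{var-sol-TVF} reduces to $0\le\int_0^T\alpha_h(t)\,F_\psi(t)\dt$, where $F_\psi(t):=\int_\Omega\partial_t u(t)\,\psi\dx+\|D(u(t)+\psi)\|(\overline\Omega)-\|Du(t)\|(\overline\Omega)\in L^1(0,T)$. Since $h^{-1}\alpha_h$ is an approximate identity, dividing by $h$ and sending $h\downarrow0$ gives $F_\psi(t_0)\ge0$ at every Lebesgue point, i.e.\ \eqref{var-sol-TVF-slicewise} holds with $v=u(t)+\psi$ for a.e.\ $t$, for each fixed $\psi$.

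\emph{The main obstacle and its resolution.} What remains is the interchange of quantifiers, from ``for each fixed competitor, a.e.\ $t$'' to ``a.e.\ $t$, for every $v\in\BV_{u_o(t)}(\Omega)\cap L^2(\Omega)$''. I expect this to be the \emph{hard part}: the constraint set $\BV_{u_o(t)}(\Omega)$ varies with $t$, and — more seriously — the total variation is only lower semicontinuous, while the strict convergence that makes it continuous (Lemma~\ref{lem:interior-approximation}) is destroyed upon adding the fixed slice $u(t)$ to an approximating sequence, so no single countable family of directions can be strictly dense in all fibres simultaneously. I would therefore argue not by density of directions but by relaxation in time. Applying the computation of the previous paragraph to an arbitrary admissible $v$ through the test function $u+\alpha_h(v-u)$ shows \eqref{var-sol-TVF-slicewise} holds with $v_0=v(t)$ for a.e.\ $t$; integrating \eqref{var-sol-TVF} directly gives $\int_0^T\Phi_t(v(t))\dt\ge\int_0^T\|Du(t)\|(\overline\Omega)\dt$ for every admissible $v$, where $\Phi_t(v_0):=\int_\Omega\partial_t u(t)(v_0-u(t))\dx+\|Dv_0\|(\overline\Omega)$. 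The target is the pointwise minimality $\inf_{v_0}\Phi_t(v_0)=\|Du(t)\|(\overline\Omega)$ for a.e.\ $t$; since $v_0=u(t)$ already gives ``$\le$'', it suffices to show $\int_0^T\inf_{v_0}\Phi_t(v_0)\dt\ge\int_0^T\|Du(t)\|(\overline\Omega)\dt$, i.e.\ that the time-integrated pointwise infimum is attained in the limit by admissible, boundary-compatible competitors. Making this precise — selecting a near-optimal $v_0(t)$ measurably in $t$ and mollifying it in time into an admissible $v$ without losing the constraint $v(t)\in\BV_{u_o(t)}(\Omega)$ or control of $\Phi_t$ — is the decisive technical step I would have to carry out.
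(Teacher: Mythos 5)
Your three constructive steps are correct and run parallel to the paper's own proof: the paper likewise integrates \eqref{var-sol-TVF-slicewise} in time and integrates by parts for the easy direction, and for the converse tests \eqref{var-sol-TVF} with $u+\zeta(t)(\bar v-u)$ for a time cut-off $\zeta$, recovering the initial condition via $\zeta=\chi_{[0,\tau]}$ with $\tau\downarrow0$ and the slicewise inequality via $\zeta$ an approximate Dirac mass at an interior time. Your additive perturbations $u+\beta_h\psi$ and $u+\alpha_h\psi$ are an equivalent variant of the paper's convex combinations, and the individual computations (the $-\tfrac12\|\psi\|_{L^2}^2$ from the self-interaction, the vanishing boundary terms, the Lebesgue-point passage) are sound.

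The gap is the one you name yourself: what you actually prove in the converse direction is ``for every fixed competitor, for a.e.\ $t$,'' whereas the lemma (and its later use in Section 5.4, e.g.\ for the bound $\|\partial_tu(t)\|_{W^{-1,\infty}}\le1$, which requires time-dependent competitors $u(t)-\varphi(t)$) needs ``for a.e.\ $t$, for every competitor.'' Your proposed resolution by measurable selection of near-minimizers plus time mollification is not carried out and would be delicate, since mollifying in time destroys control of the total variation term. The interchange can be closed much more cheaply, in two steps at fixed $t$: (i) the space $W^{1,1}(\Omega)\cap L^2(\Omega)$ is separable, and the map $v\mapsto \int_\Omega\partial_tu(t)(v-u(t))\dx+\|D\bar v(t)\|(\overline\Omega)$ is continuous in its norm because the $t$-dependent boundary penalty $\int_{\partial\Omega}|T_\Omega v-T_{\R^n\setminus\overline\Omega}u_o(t)|\dH$ depends continuously on $T_\Omega v$ in $L^1(\partial\Omega)$; hence a single countable family $\mathcal{D}\subset W^{1,1}\cap L^2$ and one null set yield the inequality for all $W^{1,1}\cap L^2$ competitors at a.e.\ $t$; (ii) for such a fixed good $t$, a general $v_0\in\BV_{u_o(t)}(\Omega)\cap L^2(\Omega)$ is reached by the strict interior approximation of Lemma~\ref{lem:interior-approximation} with boundary datum $u_o(t)$, under which both sides of the inequality converge. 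The ``simultaneous strict density in all fibres'' you worry about is never needed, because the strict approximation is performed after the time slice has been fixed. For what it is worth, the paper's own proof also stops at ``for each $v$, a.e.\ $s$'' and states the conclusion without comment, so you have correctly identified a point the paper glosses over; you just have not supplied the missing, and in fact reasonably short, argument.
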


 \begin{remark}
   Condition~\eqref{var-sol-TVF-slicewise} can be reformulated in
   terms of the subdifferential
   \begin{equation*}
     \partial\Phi(u)=\big\{w\in L^2(\Omega)\colon 
     \Phi(u)+\langle w,v-u\rangle\le \Phi(v)\mbox{ for every $v\in L^2(\Omega)$}\big\}
   \end{equation*}
   of the functional $\Phi:L^2(\Omega)\to\R$, defined by 
   \begin{equation*}
     \Phi(u)=
       \begin{cases}
         \|D\bar u\|(\overline\Omega),
         &\mbox{if }
           \bar u\in\BV_{u_o}(\Omega)\cap L^2(\Omega^\ast),\\[2ex]
         \infty,&\mbox{if }\bar u\in L^2(\Omega^\ast)\setminus\BV_{u_o}(\Omega).
       \end{cases}
   \end{equation*}
   Here, the function $\bar u$ denotes the extension of $u$ by
   $u_o$ to $\Omega^\ast\setminus\Omega$. 
   By definition of the subdifferential,
   the variational inequality~\eqref{var-sol-TVF-slicewise} can be
   reformulated as
   \begin{equation*}
     -\partial_tu(t)\in \partial\Phi(u(t))
     \qquad\mbox{for a.e. }t\in(0,T).
   \end{equation*}
 \end{remark}
 
 \begin{proof}[Proof of Lemma~\ref{lem:var-sol-slicewise}]
   Assume that the map $u$ satisfies~\eqref{var-sol-TVF-slicewise} and~\eqref{initial values} and let
   $v\in\Lw1(0,T;\BV_{u_o}(\Omega))\cap
   C^0([0,T];L^2(\Omega))$ with $\partial_tv\in L^2(\Omega_T)$ be an
   arbitrary comparison function in~\eqref{var-sol-TVF}. 
   The function $v(t)$, for a.e. $t\in(0,T)$, is admissible in~\eqref{var-sol-TVF-slicewise}.  
   Integrating the resulting
   inequalities over time, we deduce 
   \begin{align*}
     \int_0^T\|Du(t)\|(\overline\Omega)\dt
     &\le
       \iint_{\Omega_T}\partial_tu(v-u)\dxt
       +\int_0^T\|Dv(t)\|(\overline\Omega)\dt.
   \end{align*}
   Since $\partial_tu,\partial_tv\in L^2(\Omega_T)$ and $u(0)=u_o(0)$ by~\eqref{initial values},
   an integration by parts implies
   \begin{align*}
     \iint_{\Omega_T}\partial_tu(v-u)\dxt
     &=
     \iint_{\Omega_T}\partial_tv(v-u)\dxt\\
     &\quad
        -\tfrac12\int_{\Omega}|v(T)-u(T)|^2\dx
        +\tfrac12\int_{\Omega}|v(0)-u_o(0)|^2\dx.
   \end{align*}
    Combining the two preceding formulae, we obtain~\eqref{var-sol-TVF},
    which proves that $u$ is a variational solution
    to~\eqref{Cauchy-Dirichlet}.

    \noindent
    For the opposite direction, we start with a variational solution
    $u\in \Lw1(0,T;\BV_{u_o}(\Omega))\cap  C^0([0,T];L^2(\Omega))$
    with $\partial_tu\in L^2(\Omega_T)$. 
   We begin with the observation that for any $v\in\BV(\Omega)\cap
   L^2(\Omega)$,
   the extension
   \begin{equation}\label{extension-v}
     \bar v(x,t)
     =
     \begin{cases}
        v(x),&x\in\Omega,\\[0.6ex]
        u_o(x,t),&x\in\Omega^\ast\setminus\Omega,
     \end{cases}
   \end{equation}
   defines a function $v\in\Lw1(0,T;\BV_{u_o}(\Omega))\cap
   C^0([0,T];L^2(\Omega^\ast))$ with 
   $\partial_tv=0$ in $\Omega_T$. This follows by applying
   Lemma~\ref{lem:ambrosio-et-al} separately on the time slices. 
   For a cut-off function in time 
   $\zeta\in C^\infty([0,T])$ with $\zeta(T)=0$
   and the extension $\bar v$ defined above,  we consider 
  \begin{equation*}
    w=u+\zeta(t)(\bar v-u).
  \end{equation*}
  We note that this function is
  admissible as comparison function in~\eqref{var-sol-TVF}, since the
  properties of $u$ and $\bar v$ imply 
  $w\in\Lw1(0,T;\BV_{u_o}(\Omega))\cap
  C^0([0,T];L^2(\Omega))$ and $\partial_tw\in L^2(\Omega_T)$.
 By convexity of the total variation functional
  \begin{equation*}
    \|Dw(t)\|(\overline\Omega)
    \le
    (1-\zeta(t))\|Du(t)\|(\overline\Omega)+\zeta(t)\|D\bar v(t)\|(\overline\Omega)   
  \end{equation*}
  for a.e. $t\in(0,T)$, we obtain
  \begin{align*}
     \int_0^T\zeta(t)\|Du(t)\|(\overline\Omega)\dt
     &\le
     \iint_{\Omega_T}\partial_t\big(u+\zeta(v-u)\big)(v-u)\zeta\dxt
       +\int_0^T\zeta(t)\|D\bar v(t)\|(\overline\Omega)\dt\\
     &\qquad+
       \tfrac12\int_\Omega \big|(1-\zeta(0))u(0)+\zeta(0)v-u_o(0)\big|^2\dx.
   \end{align*}
   Here, we also used the fact $\zeta(T)=0$, which ensures
   that no integral over the time slice at the final
   time occurs in the variational inequality. Integrating by
   parts, the integral involving
   the time derivative can be rewritten as 
   \begin{align*}
     &\iint_{\Omega_T}\partial_t\big(u+\zeta(v-u)\big)(v-u)\zeta\dxt\\
     &\qquad=
     \iint_{\Omega_T}\partial_tu (v-u)\zeta\dxt
     +
     \iint_{\Omega_T}\big(\zeta'\zeta|v-u|^2+\zeta^2
       \tfrac12\partial_t|v-u|^2\big)\dxt\\
     &\qquad=
       \iint_{\Omega_T}\partial_tu (v-u)\zeta\dxt
       -
       \tfrac12\int_\Omega\zeta^2(0)|v-u(0)|^2\dx.
   \end{align*}
   Combining the preceding formulae, we arrive at  
     \begin{align}\label{var-ineq-cutoff}
     &\int_0^T\zeta(t)\|Du(t)\|(\overline\Omega)\dt
     \le
       \iint_{\Omega_T}\partial_tu (v-u)\zeta\dxt
       +
       \int_0^T\zeta(t)\|D\bar v(t)\|(\overline\Omega)\dt\\\nonumber
     &\qquad\qquad+
       \tfrac12\int_\Omega \big|(1-\zeta(0))u(0)+\zeta(0)v-u_o(0)\big|^2\dx
       -
       \tfrac12\int_\Omega\zeta^2(0)|v-u(0)|^2\dx.
     \end{align}
     Our first goal is to show that the initial values are
     attained. To this end, we observe that
     an approximation argument implies the above inequality also for the
     characteristic function $\zeta=\chi_{[0,\tau]}$, for any
     $\tau\in(0,T)$. This gives
    \begin{align*}
     \int_0^\tau\|Du(t)\|(\overline\Omega)\dt
     &\le
       \iint_{\Omega_\tau}\partial_tu (v-u)\dxt
       +
       \int_0^\tau \|D\bar v(t)\|(\overline\Omega)\dt\\\nonumber
     &\qquad+
       \tfrac12\int_\Omega \big(|v-u_o(0)|^2-|v-u(0)|^2\big)\dx,
    \end{align*}
    for every $\tau\in(0,T)$. Since $\partial_tv=0$ in $\Omega_T$, an
    integration by parts gives
    \begin{align*}
      \iint_{\Omega_\tau}\partial_tu (v-u)\dxt
      &=
      -\tfrac12\iint_{\Omega_\tau}\partial_t|v-u|^2\dxt\\
      &=
      \tfrac12\int_\Omega |v-u(0)|^2\dx
      -\tfrac12\int_\Omega |v-u(\tau)|^2\dx.
    \end{align*}
    Combining the two preceding formulae, we arrive at
    \begin{align}\label{var-ineq-local}
      &\tfrac12\int_\Omega |v-u(\tau)|^2\dx
      +
      \int_0^\tau\|Du(t)\|(\overline\Omega)\dt\\\nonumber
      &\qquad\le
      \int_0^\tau \|D\bar v(t)\|(\overline\Omega)\dt
      +
      \tfrac12\int_\Omega |v-u_o(0)|^2\dx,
    \end{align}
    for any $v\in\BV(\Omega)\cap L^2(\Omega)$ and $\tau\in(0,T)$. 
    For a given $\eps>0$, we choose $u_{o,\eps}\in C^\infty_0(\Omega)$
    with $\|u_{o,\eps}-u_o(0)\|_{L^2(\Omega)}\le\eps$ and apply the
    preceding estimate with $v=u_{o,\eps}$. Discarding the
    second integral on the left-hand side of~\eqref{var-ineq-local},
    we have
    \begin{align*}
      \tfrac12\int_\Omega|u_{o,\eps}-u(\tau)|^2\dx
      \le
        \int_0^\tau\|D\bar u_{o,\eps}(t)\|(\overline\Omega)\dt
        +
        \tfrac12\int_\Omega|u_{o,\eps}-u_o(0)|^2\dx,
    \end{align*}
    which implies
    \begin{align}\label{L2-bound-initial}
      \tfrac14\int_\Omega|u_o(0)-u(\tau)|^2\dx
      &\le
        \tfrac12\int_\Omega|u_{o,\eps}-u(\tau)|^2\dx
        +
        \tfrac12\int_\Omega|u_{o,\eps}-u_o(0)|^2\dx\\\nonumber
      &\le
        \int_0^\tau\|D\bar u_{o,\eps}(t)\|(\overline\Omega)\dt
        +
        \int_\Omega|u_{o,\eps}-u_o(0)|^2\dx\\\nonumber
      &\le
        \int_0^\tau\|D\bar u_{o,\eps}(t)\|(\overline\Omega)\dt
        +
        \eps^2.
    \end{align}
    Using Lemma~\ref{lem:ambrosio-et-al}, we estimate the last
    integral by 
    \begin{equation*}
      \int_0^\tau \|D\bar u_{o,\eps}(t)\|(\overline\Omega)\dt
      \le
      \tau\int_\Omega|\nabla u_{o,\eps}|\dx
      +
      \int_0^\tau
      \int_{\partial\Omega}|T_{\R^n\setminus\overline\Omega}u_o(t)|\d\mathcal{H}^{n-1}\dt \to0
    \end{equation*}
    as $\tau\downarrow0$. 
    Letting first $\tau\downarrow0$ and then
    $\eps\downarrow0$ in~\eqref{L2-bound-initial}, we conclude that
    \begin{equation*}
      \int_\Omega|u_o(0)-u(0)|^2\dx
      =
      \lim_{\tau\downarrow0}\int_\Omega|u_o(0)-u(\tau)|^2\dx
      =0,
    \end{equation*}
    which implies the assertion $u(0)=u_o(0)$.

    It remains to show~\eqref{var-sol-TVF-slicewise}. For a cut-off
    function $\zeta\in C^\infty_0((0,T))$,
    inequality~\eqref{var-ineq-cutoff} and the fact that $u(0)=u_o(0)$ imply 
    \begin{equation*}
     \int_0^T\zeta(t)\|Du(t)\|(\overline\Omega)\dt
     \le
       \iint_{\Omega_T}\partial_tu (v-u)\zeta\dxt
       +
       \int_0^T\zeta(t)\|D\bar v(t)\|(\overline\Omega)\dt
     \end{equation*}
     for every $v\in\BV(\Omega)\cap L^2(\Omega)$, where $\bar v$ is
     defined by \eqref{extension-v}. 
     For a given time $s\in(0,T)$ and $0<\delta<\min\{s,T-s\}$,
     we use this estimate with the
    cut-off function $\zeta(t)=\frac1\delta\phi(\frac{s-t}\delta)$,
    where $\phi\in C^\infty_0((-1,1))$ denotes a standard mollifier.
    By letting $\delta\downarrow0$, we infer
    \begin{align*}
     \|Du(s)\|(\overline\Omega)
     &\le
       \int_{\Omega\times\{s\}}\partial_tu (v-u)\dx
       +
       \|D\bar v(s)\|(\overline\Omega)
   \end{align*}
    for a.e. $s\in(0,T)$ and every $v\in\BV(\Omega)\cap L^2(\Omega)$.
    This implies the remaining assertion~\eqref{var-sol-TVF-slicewise}
    and completes  the proof of Lemma~\ref{lem:var-sol-slicewise}.
 \end{proof}

\section{Equivalence of variational and weak solutions}

In this section we prove the equivalence of the two 
concepts of solution that have been introduced in 
Definition~\ref{def:Mazon-solution} and Definition~\ref{def:var-sol},
respectively. The precise statement of the result is the following.

\begin{theorem}\label{thm:main}
  A function $u\in\Lw1(0,T;\BV_{u_o}(\Omega))\cap C^0([0,T];L^2(\Omega))$
  with $\partial_tu\in L^2(\Omega_T)$
  is a variational solution of~\eqref{Cauchy-Dirichlet} if and only if
  it is a weak solution of~\eqref{Cauchy-Dirichlet}. 
\end{theorem}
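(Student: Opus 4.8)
The plan is to establish the two implications separately, after reducing each to a pointwise-in-time statement. Both notions of solution presuppose the same regularity, $u\in\Lw1(0,T;\BV_{u_o}(\Omega))\cap C^0([0,T];L^2(\Omega))$ with $\partial_tu\in L^2(\Omega_T)$, and both entail the initial condition $u(0)=u_o(0)$: for a weak solution this is part of Definition~\ref{def:Mazon-solution}, while for a variational solution it is supplied by Lemma~\ref{lem:var-sol-slicewise}. Invoking Lemma~\ref{lem:Mazon-2} and Lemma~\ref{lem:var-sol-slicewise}, it therefore suffices to prove, for a.e. fixed $t\in(0,T)$, the equivalence of the two slicewise conditions: the existence of a field $z(t)$ realizing the maximal pairing~\eqref{maximal-pairing}, and the slicewise variational inequality~\eqref{var-sol-TVF-slicewise}.

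The implication from weak to variational is elementary. Given a weak solution, Definition~\ref{def:Mazon-solution} provides a field $z$ with $\|z\|_{L^\infty}\le1$, $\Div z=\partial_tu$, and the identity~\eqref{mazon-TV}. For a.e. $t$ and any competitor $v\in\BV_{u_o(t)}(\Omega)\cap L^2(\Omega)$, I would bound the right-hand side of~\eqref{mazon-TV} by the Anzellotti estimate~\eqref{Anzellotti-bound} of Lemma~\ref{lem:anzellotti-bound}, giving $(z(t),Dv)_{u_o}(\overline\Omega)\le\|z(t)\|_{L^\infty}\|Dv\|(\overline\Omega)\le\|Dv\|(\overline\Omega)$. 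Rearranging~\eqref{mazon-TV} then yields exactly~\eqref{var-sol-TVF-slicewise}, and Lemma~\ref{lem:var-sol-slicewise} identifies $u$ as a variational solution.

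The substantial implication is from variational to weak. Here Lemma~\ref{lem:var-sol-slicewise} reduces the task to producing, for a.e. $t$, a field realizing equality in~\eqref{maximal-pairing} out of the mere inequality~\eqref{var-sol-TVF-slicewise}, and this is precisely the content of the duality Theorem~\ref{thm:TV-duality}. Reading~\eqref{var-sol-TVF-slicewise} as the statement that $u(t)$ minimizes $v\mapsto\|Dv\|(\overline\Omega)+\int_\Omega\partial_tu(t)\,v\dx$ over $\BV_{u_o(t)}(\Omega)\cap L^2(\Omega)$, the duality result furnishes an admissible field $z(t)$ with $\|z(t)\|_{L^\infty}\le1$, $\Div z(t)=\partial_tu(t)$, and the maximal pairing $(z(t),Du(t))_{u_o}(\overline\Omega)=\|Du(t)\|(\overline\Omega)$. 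The engine behind Theorem~\ref{thm:TV-duality} is the area regularization: one replaces the total variation by $\A_{u_o}^{(\mu)}$, whose strictly convex integrand $\sqrt{\mu^2+|\cdot|^2}$ activates the Fenchel-type inequality~\eqref{Fenchel-pointwise} and, through Theorem~\ref{thm:area-duality}, yields a dual maximizer $z_\mu$ with defect controlled by $\mu\int_\Omega\sqrt{1-|z_\mu|^2}\dx$ as in~\eqref{Anzellotti-Fenchel-bound}; letting $\mu\downarrow0$ and exploiting the strict approximation of Lemma~\ref{lem:interior-approximation} transfers the construction to the total variation.

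Beyond invoking Theorem~\ref{thm:TV-duality} slicewise, the main obstacle is to assemble the fields $z(t)$ into a single $z\in L^\infty(\Omega_T,\R^n)$ that is jointly measurable and solves $\Div z=\partial_tu$ as a distribution on the whole cylinder $\Omega_T$. I would obtain measurability from the uniqueness built into the duality construction: for $\mu>0$ the strict concavity of $z\mapsto\mu\int_\Omega\sqrt{1-|z|^2}\dx$ singles out a unique dual maximizer $z_\mu(t)$, which depends measurably on the data $u(t)$ and $\partial_tu(t)$, and the limit field $z(t)$ inherits this measurability; the uniform bound $\|z(t)\|_{L^\infty}\le1$ then places $z$ in $L^\infty(\Omega_T,\R^n)$. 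Should uniqueness be unavailable, a measurable-selection theorem applied to the set-valued map $t\mapsto\{z:\|z\|_{L^\infty}\le1,\,\Div z=\partial_tu(t),\,(z,Du(t))_{u_o}(\overline\Omega)=\|Du(t)\|(\overline\Omega)\}$ provides the selection. Once $z$ is measurable, the slicewise spatial identity $\Div z(t)=\partial_tu(t)$ integrates against test functions to yield the distributional equation~\eqref{1-harmonic-flow} on $\Omega_T$, while the pointwise maximal pairing holds by construction; Lemma~\ref{lem:Mazon-2} then concludes that $u$ is a weak solution.
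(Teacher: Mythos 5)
Your overall architecture coincides with the paper's: the easy direction via Lemma~\ref{lem:anzellotti-bound} and Lemma~\ref{lem:var-sol-slicewise} is exactly the paper's argument, and for the converse you correctly reduce to the slicewise duality of Theorem~\ref{thm:TV-duality} and correctly identify the joint measurability of $t\mapsto z(t)$ as the remaining obstacle. Two points, however, deserve attention. First, a small omission: before invoking Theorem~\ref{thm:TV-duality} with $f=\partial_tu(t)$ you must verify its hypothesis $\|f\|_{W^{-1,\infty}(\Omega)}\le1$; the paper derives $\esssup_t\|\partial_tu(t)\|_{W^{-1,\infty}(\Omega)}\le1$ from the variational inequality by testing with $v=u(t)-\varphi(t)$, and this step should not be skipped.

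Second, and more seriously, your primary route to measurability has a genuine gap. It is true that for fixed $\mu>0$ the regularized problem has a unique minimizer $u_\mu$ (by the strictly convex term $\tfrac{\mu}{2}\int|u|^2$) and hence a canonical $z_\mu=\nabla u_\mu/\sqrt{\mu^2+|\nabla u_\mu|^2}$, but the dual maximizer for the limit problem $\mu=0$ is \emph{not} unique in general: the strict concavity you invoke is exactly the term $\mu\int_\Omega\sqrt{1-|z|^2}\dx$, which degenerates as $\mu\downarrow0$, and in the proof of Theorem~\ref{thm:TV-duality} the limit field $z_\ast$ is extracted by weak$\ast$ compactness along a subsequence $\mu_i\downarrow0$ chosen for each fixed $t$. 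A non-unique, subsequence-dependent limit does not ``inherit'' measurability in $t$, so the claim that $z(t)$ depends measurably on the data is unsupported. Your fallback via a measurable selection theorem could in principle be made to work (the constraint set is weak$\ast$ closed in the unit ball of $L^\infty(\Omega,\R^n)$, which is a compact metrizable space), but verifying measurability of the set-valued map is nontrivial and is not the paper's route. The paper instead sidesteps selection entirely: it uses the (possibly non-measurable) slicewise maximizers $z_\ast(t)$ only to define a linear functional $\ell$ on the subspace $\mathcal{W}=\{r\nabla u_o+\nabla\varphi\}\subset L^1(\Omega_T,\R^n)$, where measurability in $t$ is automatic because $\int_\Omega z_\ast(t)\cdot\nabla u_o\dx$ equals the measurable maximum in~\eqref{duality-slicewise} and $\int_\Omega z_\ast(t)\cdot\nabla\varphi\dx=-\int_\Omega\partial_tu\,\varphi\dx$; a Hahn--Banach extension and the Riesz representation of $[L^1(\Omega_T,\R^n)]'$ then produce a single measurable $z\in L^\infty(\Omega_T,\R^n)$ with $\|z\|_{L^\infty}\le1$, $\Div z=\partial_tu$, and $\int_\Omega z(t)\cdot\nabla u_o\dx$ attaining the maximum for a.e.\ $t$. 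You would need either to adopt this device or to carry out the measurable selection argument in detail for your proof to close.
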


In Subsection~\ref{sec:weakisvar} we show that a weak solution is a variational solution. 
The proof of the converse claim is presented in the remaining three subsections. The key step
is an elliptic duality result for the total variation functional in 
Subsection~\ref{sec:duality-TV}. This will be established as a stability
result by approximating the total variation by area-type functionals in Subection~\ref{sec:area-duality}.
Finally, in Subsection~\ref{sec:varisweak} we complete the proof of the claim that a variational solution is a weak solution. 

\subsection{Weak solutions are variational solutions}
\label{sec:weakisvar}

  Assume that $u\in\Lw1(0,T;\BV_{u_o}(\Omega))\cap C^0([0,T];L^2(\Omega))$,
  with $\partial_tu\in L^2(\Omega_T)$, is a weak solution according to Definition~\ref{def:Mazon-solution}. Let $z\in
  L^\infty(\Omega_T)$ with $\|z\|_{L^\infty}\le1$ be the vector field
  that is provided by Definition~\ref{def:Mazon-solution}. For
  a.e. $t\in(0,T)$ and any
  $v\in\BV_{u_o(t)}(\Omega)\cap L^2(\Omega)$, we use~\eqref{mazon-TV} and~\eqref{Anzellotti-bound} to
  deduce
  \begin{align*}
    \|Du(t)\|(\overline\Omega)
    +\int_{\Omega\times\{t\}}\partial_tu(u-v)\dx
    =
    (z(t),Dv)_{u_o}(\overline\Omega)
    \le
    \|Dv\|(\overline\Omega)
  \end{align*}
  for a.e. $t\in(0,T)$. This means that the variational
  inequality~\eqref{var-sol-TVF-slicewise} is satisfied on a.e. time
  slice, and Lemma~\ref{lem:var-sol-slicewise} implies
  that $u$ is a variational solution according to Definition~\ref{def:var-sol}. 

\subsection{An auxiliary result for the area functional}
\label{sec:area-duality}

The following approximation result for the area functional \eqref{def:areafcnl} will be applied in the proof of Theorem~\ref{thm:TV-duality}.

\begin{theorem}\label{thm:area-duality}
  Let $f\in W^{-1,\infty}(\Omega)\cap L^2(\Omega)$, $u_o\in
  W^{1,1}(\Omega)\cap L^2(\Omega)$, $\mu>0$, and $\lambda\in\R$ be
  given. Assume that
  $u\in\BV_{u_o}(\Omega)\cap L^2(\Omega^\ast)$ is a minimizer of the functional
  \begin{equation*}
    \Psi(v)=\Amu(v)+\int_{\Omega}\big(\tfrac\lambda2|v|^2+f(v-u_o)\big)\dx
  \end{equation*}
  in the space $\BV_{u_o}(\Omega)\cap L^2(\Omega^\ast)$.
  Then the vector field
  \begin{equation*}
    z=\frac{\nabla u}{\sqrt{\mu^2+|\nabla u|^2}}
  \end{equation*}
  satisfies

  \begin{equation}\label{div=f}
    \Div z=\lambda u+f \qquad\mbox{in $\Omega$}
  \end{equation}
  in the sense of distributions and we have the
  estimate
  \begin{align}\label{dual-problem-area}
    &\Amu(u)+\int_{\Omega}\big(\tfrac\lambda2|u|^2+f(u-u_o)\big)\dx\\\nonumber
    &\qquad\le
      \int_\Omega z\cdot\nabla u_o\dx
      +
      \tfrac\lambda2\int_\Omega|u_o|^2\dx
      +
      \mu\int_\Omega\sqrt{1-|z|^2}\dx.
    % \\\nonumber
    % &=
    % \max_{z\in S^\infty_f(\Omega)}\bigg(\int_\Omega z\cdot\nabla u_o\dx
    % +
    % \mu\int_\Omega\sqrt{1-|z|^2}\dx\bigg),
  \end{align}
  % where we abbreviated $S^\infty_f(\Omega):=\{z\in
  % L^\infty(\Omega,\R^n)\colon \|z\|_{L^\infty}\le1\ \mbox{and}\ \Div z=f\}$. 
\end{theorem}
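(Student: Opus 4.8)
\emph{Structure.} The statement contains two independent claims: the Euler--Lagrange equation~\eqref{div=f} and the estimate~\eqref{dual-problem-area}. Both follow from the minimality of $u$, and the parameter $\mu>0$ is used throughout to guarantee that $\xi\mapsto\sqrt{\mu^2+|\xi|^2}$ is smooth, so that $z$ is a well-defined $L^\infty$-field with $|z|<1$ almost everywhere.

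\emph{The Euler--Lagrange equation.} For $\varphi\in C^\infty_0(\Omega)$ the function $u+s\varphi$ again lies in $\BV_{u_o}(\Omega)\cap L^2(\Omega^\ast)$, and since $\varphi$ is smooth and compactly supported in $\Omega$ one has $D^s(u+s\varphi)=D^su$ and $T_\Omega(u+s\varphi)=T_\Omega u$; hence $\|D^s(u+s\varphi)\|(\overline\Omega)$ is independent of $s$. Differentiating $s\mapsto\Psi(u+s\varphi)$ at $s=0$, where differentiation under the integral sign is justified by the $\mu>0$ bound $|\nabla\varphi|\in L^1(\Omega)$, and using that $u$ is a minimizer, I obtain $\int_\Omega z\cdot\nabla\varphi\dx+\int_\Omega(\lambda u+f)\varphi\dx=0$ for every such $\varphi$. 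This is precisely~\eqref{div=f} in the distributional sense, and only $f\in L^2(\Omega)$ enters here.

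\emph{The estimate.} The decisive observation is that $z=\nabla u(\mu^2+|\nabla u|^2)^{-1/2}$ is exactly the field for which~\eqref{Fenchel-pointwise} with $v=\nabla u$ holds with \emph{equality}. Integrating this identity gives $\int_\Omega\sqrt{\mu^2+|\nabla u|^2}\dx=\int_\Omega z\cdot\nabla u\dx+\mu\int_\Omega\sqrt{1-|z|^2}\dx$. Substituting this into the left-hand side of~\eqref{dual-problem-area}, the two terms $\mu\int_\Omega\sqrt{1-|z|^2}\dx$ cancel against the corresponding term on the right, and a direct computation (in which the forcing contributions drop out and the quadratic terms combine) reduces the claim to
\[
\underbrace{\int_\Omega z\cdot\nabla(u_o-u)\dx-\|D^su\|(\overline\Omega)+\lambda\!\int_\Omega u(u_o-u)\dx+\int_\Omega f(u_o-u)\dx}_{(\ast)}+\tfrac\lambda2\!\int_\Omega|u-u_o|^2\dx\ge0.
\]
I recognise $(\ast)$ as the one-sided derivative at $t=0^+$ of $t\mapsto\Psi\big((1-t)u+tu_o\big)$. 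The competitor $w_t=(1-t)u+tu_o$ is admissible, and since $u_o\in W^{1,1}(\Omega^\ast)$ carries no interior singular part and has continuous trace across $\partial\Omega$, Lemma~\ref{lem:ambrosio-et-al} applied on $\Omega$ yields the \emph{affine} scaling $\|D^sw_t\|(\overline\Omega)=(1-t)\|D^su\|(\overline\Omega)$. Differentiating the smooth area and $L^2$ contributions and invoking $\Psi(w_t)\ge\Psi(u)$ on $[0,1]$ gives $(\ast)\ge0$, while the remaining term $\tfrac\lambda2\int_\Omega|u-u_o|^2\dx$ is nonnegative. This proves the estimate.

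\emph{Main obstacle.} The genuine difficulty is the singular part $\|D^su\|(\overline\Omega)$ together with the boundary jump. Because $|z|<1$ strictly, $z$ cannot be paired with $D^su$ so as to recover its full mass, and the bound of Lemma~\ref{lem:Fenchel-Anzellotti} only produces an inequality of the wrong sign for this purpose. The point of perturbing toward the \emph{smooth} datum $u_o$ is that it linearises the singular part, so that minimality of $u$ supplies the required inequality directly, bypassing any pointwise pairing of $z$ against the singular measure. The remaining technical issues---admissibility of $w_t$, the affine behaviour of the boundary jump term, and differentiation under the integral sign---are routine.
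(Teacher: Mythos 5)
Your proof is correct and takes essentially the same route as the paper: the Euler--Lagrange equation via compactly supported perturbations (which leave $D^su$ unchanged), and the estimate via the one-sided derivative of $\Psi$ along the segment toward $u_o$ (exploiting the affine scaling $\|D^sw_t\|(\overline\Omega)=(1-t)\|D^su\|(\overline\Omega)$) combined with the equality case of \eqref{Fenchel-pointwise}; you merely invoke the Fenchel identity before, rather than after, the minimality argument. The only caveat is that discarding $\tfrac\lambda2\int_\Omega|u-u_o|^2\dx$ requires $\lambda\ge0$, but this is precisely the paper's own Young-inequality step, and the theorem is only ever applied with $\lambda=\mu>0$.
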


\begin{proof}
  For the proof of~\eqref{div=f}, we test the minimality of $u$
  with the comparison map $v_r=u-r\varphi\in\BV_{u_o}(\Omega)\cap L^2(\Omega^\ast)$,
  where $\varphi\in C^\infty_0(\Omega)$ and $r>0$. We apply the fact
  that $Du$ and $Dv_r$ have the same singular parts. This implies
  \begin{align*}
    &\int_{\Omega}\sqrt{\mu^2+|\nabla u|^2}\dx
    +\int_{\Omega}\big(\tfrac\lambda2|u|^2+f(u-u_o)\big)\dx\\
    &\qquad\le
    \int_{\Omega}\sqrt{\mu^2+|\nabla u-r\nabla\varphi|^2}\dx
    +
    \int_{\Omega}\big(\tfrac\lambda2|u-r\varphi|^2+f(u-r\varphi-u_o)\big)\dx,
  \end{align*}
  and, after dividing by $r>0$, 
    \begin{align*}
    \int_{\Omega}f\varphi\dx
    &\le
      \frac1r\int_{\Omega}\Big(\sqrt{\mu^2+|\nabla u-r\nabla\varphi|^2}-
      \sqrt{\mu^2+|\nabla u|^2}\Big)\dx\\
    &\quad+
      \frac\lambda2\frac1r\int_{\Omega}\big(|u-r\varphi|^2-|u|^2\big)\dx.
    \end{align*}
    Letting $r\downarrow0$ on the right-hand side, we deduce
    \begin{align*}
      \int_{\Omega}f\varphi\dx
      &\le
      \int_\Omega\frac{\partial}{\partial
      r}\Big|_{r=0}\sqrt{\mu^2+|\nabla u-r\nabla\varphi|^2}\dx
      +
      \frac\lambda2\int_\Omega\frac{\partial}{\partial
      r}\Big|_{r=0}\,|u-r\varphi|^2\dx\\
      &=-\int_\Omega\frac{\nabla
        u\cdot\nabla\varphi}{\sqrt{\mu^2+|\nabla u|^2}}\dx
        -
        \lambda\int_\Omega u\varphi\dx.
  \end{align*}
  Note that it is allowed to differentiate under the integrals because
  in the first case, 
  the derivative of the integral is dominated by $|\nabla\varphi|\in
  L^1(\Omega)$, and in the second integral, it is bounded by
  $2(|u|+|\varphi|)|\varphi|\in L^1(\Omega)$.
  In view of the definition of $z$, we have shown
  that 
  \begin{align*}
    \int_{\Omega}(\lambda u+f)\varphi\dx
    \le
    -\int_\Omega z\cdot \nabla\varphi\dx      
  \end{align*}
  holds true for every $\varphi\in C^\infty_0(\Omega)$. 
  Since the same estimate holds with $-\varphi$ instead of $\varphi$,
  the opposite inequality holds as well. This proves $\Div z=\lambda u+f$ in
  the distributional sense in $\Omega$.
   
  Next, we use $w_r=u+r(u_o-u)\in\BV_{u_o}(\Omega)\cap
  L^2(\Omega^\ast)$, for $r>0$, as a comparison function for the minimizer
  $u$.
  Since
  $D^sw_r=(1-r)D^su$, we obtain
  \begin{align*}
    &\int_{\Omega}\sqrt{\mu^2+|\nabla u|^2}\dx
    +\|D^su\|(\overline\Omega)
    +\int_{\Omega}\big(\tfrac\lambda2|u|^2+f(u-u_o)\big)\dx\\
    &\quad\le
      \int_{\Omega}\sqrt{\mu^2+|\nabla w_r|^2}\dx
      +(1-r)\|D^su\|(\overline\Omega)
      +\int_{\Omega}\big(\tfrac\lambda2|w_r|^2+(1-r)f(u-u_o)\big)\dx.
  \end{align*}
  Rearranging the terms and dividing by $r>0$, we deduce
  \begin{align*}
    &\|D^su\|(\overline\Omega)+\int_{\Omega}f(u-u_o)\dx\\
    &\qquad\le
      \frac1r\int_{\Omega}\big(\sqrt{\mu^2+|\nabla
      w_r|^2}-\sqrt{\mu^2+|\nabla u|^2}\big)\dx
      +
      \frac\lambda2\frac1r\int_{\Omega}\big(|w_r|^2-|u|^2\big)\dx.      
  \end{align*}
  Passing to the limit $r\downarrow0$, we have
  \begin{align*}
    &\|D^su\|(\overline\Omega)+\int_{\Omega}f(u-u_o)\dx\\
    &\qquad\le
      \int_{\Omega}\frac{\partial}{\partial r}\Big|_{r=0}\sqrt{\mu^2+|\nabla
      w_r|^2}\dx
      +
      \frac\lambda2\int_{\Omega}
      \frac{\partial}{\partial r}\Big|_{r=0}|w_r|^2\dx\\
    &\qquad=
      \int_{\Omega}\frac{\nabla u}{\sqrt{\mu^2+|\nabla
      u|^2}}\cdot(\nabla u_o-\nabla u)\dx
      +
      \lambda\int_\Omega u(u_o-u)\dx.
  \end{align*}
  Here, it is legitimate to differentiate under the integral because
  the derivative of the integrands are dominated by $|\nabla u_o-\nabla
  u|\in L^1(\Omega)$ and $2(|u|+|u_o|)|u_o-u|\in L^1(\Omega)$,
  respectively.
  By Young's inequality, 
  \begin{align*}
    \lambda\int_\Omega u(u_o-u)\dx
    \le
    \tfrac\lambda2\int_\Omega \big(|u_o|^2-|u|^2\big)\dx.
  \end{align*}
  Combining the two preceding estimates and
  recalling the definition of $z$, we arrive at
  \begin{align}\label{euler-area-1}
    &\int_{\Omega}\frac{|\nabla u|^2}{\sqrt{\mu^2+|\nabla
    u|^2}}\dx+\|D^su\|(\overline\Omega)+\int_{\Omega}\big(\tfrac\lambda2|u|^2+f(u-u_o)\big)\dx\\\nonumber
    &\qquad\qquad\le
    \int_{\Omega} z\cdot\nabla u_o\dx
    +\tfrac\lambda2\int_\Omega |u_o|^2\dx.
  \end{align}
  For the first integrand on the left-hand side, we have the identity
  \begin{align*}
    \frac{|\nabla u|^2}{\sqrt{\mu^2+|\nabla u|^2}}
    =
    \sqrt{\mu^2+|\nabla u|^2}-\mu\sqrt{1-|z|^2}.    
  \end{align*}
  This corresponds to the equality case in~\eqref{Fenchel-pointwise}. Thus~\eqref{euler-area-1} can be rewritten as
    \begin{align}\label{euler-area-2}
    &\int_{\Omega}\sqrt{\mu^2+|\nabla u|^2}\dx+\|D^su\|(\overline\Omega)
      +\int_{\Omega}\big(\tfrac\lambda2|u|^2+f(u-u_o)\big)\dx\\\nonumber
    &\qquad\le
    \int_{\Omega} z\cdot\nabla u_o\dx
    +
    \tfrac\lambda2\int_\Omega |u_o|^2\dx
    +
    \mu\int_\Omega\sqrt{1-|z|^2}\dx,
    \end{align}
    which is the asserted estimate \eqref{dual-problem-area}.
 \end{proof}

\subsection{A duality result for the total variation functional}
\label{sec:duality-TV}

The following duality result will be applied in the proof of
Theorem~\ref{thm:main}. More general duality results for problems with linear
growth have been established in \cite{Beck-Schmidt:2015}. Here, we give a
simple proof for a special case.
The argument applies an approximation process given by Theorem~\ref{thm:area-duality}.

\begin{theorem}\label{thm:TV-duality}
  Let $f\in W^{-1,\infty}(\Omega)\cap L^2(\Omega)$ with
  $\|f\|_{W^{-1,\infty}}\le 1$ and $u_o\in W^{1,1}(\Omega^\ast)\cap
    L^2(\Omega^\ast)$. Then we have 
  \begin{align}\label{dual-problem-TV}
    \inf_{u\in\BV_{u_o}(\Omega)\cap L^2(\Omega^\ast)}
    \bigg(\|Du\|(\overline\Omega)+\int_{\Omega}f(u-u_o)\dx\bigg)
    =
    \max_{z\in S^\infty_f(\Omega)}\int_\Omega z\cdot\nabla u_o\dx,
  \end{align}
  where $S^\infty_f(\Omega)=\{z\in
  L^\infty(\Omega,\R^n)\colon \|z\|_{L^\infty}\le1\ \mbox{and}\ \Div z=f\}$. 
\end{theorem}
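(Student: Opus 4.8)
The plan is to prove the two inequalities between the sides of \eqref{dual-problem-TV} separately, obtaining the maximizer on the right as a weak-$*$ limit of the vector fields produced by the area approximation in Theorem~\ref{thm:area-duality}. Throughout, let $I$ denote the infimum on the left-hand side of \eqref{dual-problem-TV}.

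For the inequality ``$\ge$'' I would argue directly. Fix any $z\in S^\infty_f(\Omega)$; since $\Div z=f\in L^2(\Omega)$, the Anzellotti pairing is defined, and for every $v\in\BV_{u_o}(\Omega)\cap L^2(\Omega)$ Lemma~\ref{lem:comparison-pairings} gives $\int_\Omega f(v-u_o)\dx=\int_\Omega z\cdot\nabla u_o\dx-(z,Dv)_{u_o}(\overline\Omega)$. Combining this with the bound $(z,Dv)_{u_o}(\overline\Omega)\le\|z\|_{L^\infty}\|Dv\|(\overline\Omega)\le\|Dv\|(\overline\Omega)$ from Lemma~\ref{lem:anzellotti-bound} yields $\|Dv\|(\overline\Omega)+\int_\Omega f(v-u_o)\dx\ge\int_\Omega z\cdot\nabla u_o\dx$. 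Taking the infimum over $v$ and the supremum over $z$ shows $I\ge\sup_{z\in S^\infty_f(\Omega)}\int_\Omega z\cdot\nabla u_o\dx$.

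For the reverse inequality I would use Theorem~\ref{thm:area-duality} as an approximation device. For parameters $\mu,\lambda\in(0,1]$ consider $\Psi_{\mu,\lambda}(v)=\Amu(v)+\int_\Omega(\tfrac\lambda2|v|^2+f(v-u_o))\dx$ on $\BV_{u_o}(\Omega)\cap L^2(\Omega^\ast)$. Because $f\in L^2(\Omega)$, the quadratic term forces an $L^2$-bound on a minimizing sequence, which in turn bounds the linear term and hence $\Amu$, giving a uniform $\BV(\Omega^\ast)$-bound; together with lower semicontinuity of $\Amu$ the direct method produces a minimizer $u_{\mu,\lambda}$. Theorem~\ref{thm:area-duality} then furnishes $z_{\mu,\lambda}=\nabla u_{\mu,\lambda}/\sqrt{\mu^2+|\nabla u_{\mu,\lambda}|^2}$ with $\|z_{\mu,\lambda}\|_{L^\infty}\le1$, $\Div z_{\mu,\lambda}=\lambda u_{\mu,\lambda}+f$, and the estimate \eqref{dual-problem-area}. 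Bounding the left-hand side of \eqref{dual-problem-area} from below by $\|Du_{\mu,\lambda}\|(\overline\Omega)+\int_\Omega f(u_{\mu,\lambda}-u_o)\dx\ge I$ (using $\Amu\ge\|D\cdot\|(\overline\Omega)$ and that $u_{\mu,\lambda}$ competes for $I$), and the two error terms on the right by $\tfrac\lambda2\|u_o\|_{L^2}^2$ and $\mu|\Omega|$, I obtain $I\le\int_\Omega z_{\mu,\lambda}\cdot\nabla u_o\dx+\tfrac\lambda2\|u_o\|_{L^2}^2+\mu|\Omega|$.

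It remains to pass to the limit $\mu,\lambda\downarrow0$. By weak-$*$ compactness in $L^\infty(\Omega,\R^n)$ a subsequence of $z_{\mu,\lambda}$ converges weakly-$*$ to some $z$ with $\|z\|_{L^\infty}\le1$, and $\int_\Omega z_{\mu,\lambda}\cdot\nabla u_o\dx\to\int_\Omega z\cdot\nabla u_o\dx$ since $\nabla u_o\in L^1$. The main obstacle is to verify that the divergence constraint survives the limit: the Euler--Lagrange relation only gives $\Div z_{\mu,\lambda}=\lambda u_{\mu,\lambda}+f$, so I must show $\lambda u_{\mu,\lambda}\to0$ distributionally. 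This is exactly where the hypothesis $\|f\|_{W^{-1,\infty}}\le1$ enters: it lets me estimate $|\int_\Omega f(v-u_o)\dx|\le\Amu(v)+\|\nabla u_o\|_{L^1(\Omega)}$, by bounding the pairing through the total variation of the zero-extension of $v-u_o$, whose boundary contribution is already contained in $\Amu$. Inserting this into \eqref{dual-problem-area} makes the area terms cancel and leaves $\tfrac\lambda2\|u_{\mu,\lambda}\|_{L^2}^2\le2\|\nabla u_o\|_{L^1(\Omega)}+\tfrac\lambda2\|u_o\|_{L^2}^2+\mu|\Omega|$, so that $\|\lambda u_{\mu,\lambda}\|_{L^2}^2=\lambda^2\|u_{\mu,\lambda}\|_{L^2}^2\to0$. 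Consequently $\Div z=f$, hence $z\in S^\infty_f(\Omega)$ and $I\le\int_\Omega z\cdot\nabla u_o\dx$. Combined with the first inequality this forces equality throughout and shows that the supremum is attained at $z$, which is the assertion.
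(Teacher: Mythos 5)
Your proposal is correct and follows essentially the same route as the paper: the easy inequality via Lemma~\ref{lem:comparison-pairings} and the Anzellotti bound, and the reverse inequality by minimizing the regularized area functional, invoking Theorem~\ref{thm:area-duality}, and extracting a weak-$\ast$ limit $z$ of the fields $z_{\mu,\lambda}$ with $\Div z=f$. The only (harmless) deviations are that the paper couples the parameters by setting $\lambda=\mu$ and replacing $f$ by $(1-\mu)f$ to obtain coercivity from the $W^{-1,\infty}$-bound, whereas you keep $f$ unscaled and get coercivity from $f\in L^2(\Omega)$, and you derive the bound on $\lambda\|u_{\mu,\lambda}\|_{L^2}^2$ from \eqref{dual-problem-area} rather than from the comparison $\Psi_\mu(u_\mu)\le\Psi_\mu(u_o)$.
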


\begin{proof}
Let  $u\in\BV_{u_o}(\Omega)\cap L^2(\Omega^\ast)$ and $z\in S^\infty_f(\Omega)$.
  By Lemma~\ref{lem:comparison-pairings} and~\eqref{Anzellotti-bound} we have
  \begin{align*}
    \int_\Omega z\cdot\nabla u_o\dx
    =
      (z,Du)_{u_o}(\overline\Omega)+\int_\Omega f(u-u_o)\dx
    \le
      \|Du\|(\overline\Omega)+\int_\Omega f(u-u_o)\dx.
  \end{align*}
  Taking the supremum on the left-hand side and the infimum on the
  right, we infer
  \begin{align}\label{dual-problem-easy-estimate}
    \sup_{z\in S^\infty_f(\Omega)}\int_\Omega z\cdot\nabla u_o\dx
    \le
    \inf_{u\in\BV_{u_o}(\Omega)\cap L^2(\Omega^\ast)}
    \bigg(\|Du\|(\overline\Omega)+\int_{\Omega}f(u-u_o)\dx\bigg).
  \end{align}

  In order to conclude the opposite inequality,
  we construct a minimizer of the functional  
  \begin{align*}
    \Psi_\mu(u):=\Amu(u)+\frac\mu2\int_\Omega |u|^2\dx+\int_\Omega f_\mu(u-u_o)\dx
  \end{align*}
  in the space $\BV_{u_o}(\Omega)\cap\L^2(\Omega^\ast)$ for
  $\mu\in(0,1)$, where $f_\mu:=(1-\mu)f$.
  Since every $u\in\BV_{u_o}(\Omega)\cap\L^2(\Omega^\ast)$ can be
  strictly approximated by functions $u_i\in (u_o+W^{1,1}_0(\Omega))\cap
  L^2(\Omega)$, $i\in\N$, in the sense of Lemma~\ref{lem:interior-approximation},
  we have 
  \begin{align*}
    \bigg|\int_\Omega f_\mu(u-u_o)\dx\bigg|
    &=
      \lim_{i\to\infty}\bigg|\int_\Omega f_\mu(u_i-u_o)\dx\bigg|\\[0.7ex]
    &\le
      \lim_{i\to\infty}\|f_\mu\|_{W^{-1,\infty}}\|\nabla u_i-\nabla u_o\|_{L^1(\Omega)}\\[0.7ex]
    &\le
    (1-\mu)\big(\|Du\|(\overline\Omega)+\|\nabla u_o\|_{L^1(\Omega)}\big)
  \end{align*}
  for every $u\in\BV_{u_o}(\Omega)\cap\L^2(\Omega^\ast)$. This implies the lower
  bound 
  \begin{align}
    \label{coercive}
    \Psi_\mu(u)
    &\ge
    \|Du\|(\overline\Omega)+\tfrac\mu2\|u\|_{L^2(\Omega)}^2-\bigg|\int_\Omega
      f_\mu(u-u_o)\dx\bigg|\\\nonumber
    &\ge
      \mu\|Du\|(\overline\Omega)
      +
      \tfrac\mu2\|u\|_{L^2(\Omega)}^2
      -
      (1-\mu)\|\nabla u_o\|_{L^1(\Omega)}.
  \end{align}
  We deduce that $\Psi_\mu$ is coercive on the space
  $\BV_{u_o}(\Omega)\cap L^2(\Omega^\ast)$. Since, moreover, $\Amu$ is
  convex, the direct method of the calculus of variations yields the
  existence of a minimizer $u_\mu\in\BV_{u_o}(\Omega)\cap
  L^2(\Omega^\ast)$ of $\Psi_\mu$. 
  We define the vector field
  \begin{equation*}
    z_\mu=\frac{\nabla u_\mu}{\sqrt{\mu^2+|\nabla u_\mu|^2}},
  \end{equation*}
  where $\nabla u_\mu$ denotes the Lebesgue density of the absolutely
  continuous part of $Du_\mu$.
  Theorem~\ref{thm:area-duality} with $\lambda=\mu$ and $f$ replaced
  by $f_\mu$ implies that
  \begin{equation}\label{Div-z-mu}
    \Div z_\mu =\mu u_\mu + f_\mu
    \qquad\mbox{in }\Omega
  \end{equation}
  in the sense of distributions, as well as the estimate 
  \begin{align*}
    &\Amu(u_\mu)
      +\int_{\Omega}\big(\tfrac\mu2|u_\mu|^2+f_\mu(u_\mu-u_o)\big)\dx\\\nonumber
    &\qquad\le
      \int_{\Omega} z_\mu\cdot\nabla u_o\dx
      +
      \tfrac\mu2\int_\Omega |u_o|^2\dx
      +\mu\int_\Omega\sqrt{1-|z_\mu|^2}\dx.
  \end{align*}
  Since
  \[
  \Amu(u_\mu)\ge\|Du_\mu\|(\overline\Omega)\ge(1-\mu)\|Du_\mu\|(\overline\Omega),
  \]
  the left-hand
  side can be bounded from below in terms of the infimum
  in~\eqref{dual-problem-TV}. More precisely, we have
  \begin{align}
    \label{upper-bound-inf-1}
    &(1-\mu)\inf_{u\in\BV_{u_o}(\Omega)\cap L^2(\Omega^\ast)}
    \bigg(\|Du\|(\overline\Omega)+\int_{\Omega}f(u-u_o)\dx\bigg)\\\nonumber
    &\qquad\le
      \int_{\Omega} z_\mu\cdot\nabla u_o\dx
      +
      \tfrac\mu2\int_\Omega |u_o|^2\dx
      +
      \mu\int_\Omega\sqrt{1-|z_\mu|^2}\dx
  \end{align}
  for every $\mu\in(0,1)$. Since $\|z_\mu\|_{L^\infty}\le1$ for
  every $\mu\in(0,1)$, we can find a sequence $\mu_i\downarrow0$ and a
  limit vector field $z_\ast\in L^\infty(\Omega,\R^n)$ with
  \begin{equation}\label{weak-star-z-mu}
    z_{\mu_i}\wsto z_\ast
    \qquad\mbox{weakly$\ast$ in $L^\infty(\Omega,\R^n)$, as $i\to\infty$.}
  \end{equation}
  Using estimate~\eqref{coercive} and the minimality of $u_\mu$, we
  infer the bound
  \begin{align*}
    \tfrac{\mu_i}2\|u_{\mu_i}\|_{L^2(\Omega)}^2
    &\le
    \Psi_{\mu_i}(u_{\mu_i})+(1-\mu_i)\|\nabla u_o\|_{L^1(\Omega)}\\
    &\le
    \Psi_{\mu_i}(u_o)+\|\nabla u_o\|_{L^1(\Omega)}\\
    &\le
    \int_\Omega\Big(\sqrt{1+|\nabla u_o|^2}+|u_o|^2+|\nabla u_o|\Big)\dx.
  \end{align*}
  This implies that the sequence of functions $\sqrt{\mu_i}u_{\mu_i}$, $i\in\N$, is bounded
  in $L^2(\Omega)$, and we get
  \begin{equation*}
    \mu_i\int_\Omega u_{\mu_i}\varphi\dx
    \le \mu_i\|u_{\mu_i}\|_{L^2(\Omega)}\|\varphi\|_{L^2(\Omega)}
    \to0
  \end{equation*}
  as $i\to\infty$, for every $\varphi\in
  C^\infty_0(\Omega)$.
  We use this together with the convergence~\eqref{weak-star-z-mu} to
  pass to the limit in~\eqref{Div-z-mu}, which implies 
  $\Div z_\ast=f$ in $\Omega$, in the sense of distributions. Since
  \[
  \|z_\ast\|_{L^\infty}\le\liminf_{i\to\infty}\|z_{\mu_i}\|_{L^\infty}\le1,
  \]
  we infer $z_\ast\in S^\infty_f(\Omega)$. 
  Next, we use the convergence~\eqref{weak-star-z-mu}
  to pass to the limit $i\to\infty$
  in~\eqref{upper-bound-inf-1} and arrive at 
  \begin{align*}
    &\inf_{u\in\BV_{u_o}(\Omega)\cap L^2(\Omega^\ast)}
    \bigg(\|Du\|(\overline\Omega)+\int_{\Omega}f(u-u_o)\dx\bigg)\\
    &\qquad\qquad\le
    \int_\Omega z_\ast\cdot\nabla u_o\dx
    \le
    \sup_{z\in S^\infty_f(\Omega)}\int_\Omega z\cdot\nabla u_o\dx\\
    &\qquad\qquad\le
    \inf_{u\in\BV_{u_o}(\Omega)\cap L^2(\Omega)}
    \bigg(\|Du\|(\overline\Omega)+\int_{\Omega}f(u-u_o)\dx\bigg).
  \end{align*}
  For the last inequality, we recall~\eqref{dual-problem-easy-estimate}.
  We conclude that we have an equality throughout, and in
  particular, the supremum above is attained. This completes the proof
  of~\eqref{dual-problem-TV}.
\end{proof}

\subsection{Variational solutions are weak solutions}
\label{sec:varisweak}

In this subsection, we complete the proof of Theorem~\ref{thm:main}. 
  To this end, assume that 
  $u\in \Lw1(0,T;\BV_{u_o}(\Omega))\cap
  C^0([0,T];L^2(\Omega))$ is a variational solution
  of~\eqref{Cauchy-Dirichlet}
  with $\partial_tu\in L^2(\Omega_T)$.
  Lemma~\ref{lem:var-sol-slicewise} implies that the initial values
  are attained in the sense $u(0)=u_o(0)$ and
  that the slicewise variational inequality  
   \begin{equation}\label{var-ineq-TV-slicewise}
     \|Du(t)\|(\overline\Omega)
     \le
     \int_{\Omega\times\{t\}}\partial_tu(v-u)\dx
     +\|Dv\|(\overline\Omega)
   \end{equation}
   holds true for every $v\in\BV_{u_o(t)}(\Omega)\cap
   L^2(\Omega^\ast)$ and a.e. $t\in(0,T)$. 
   For the proof of~\eqref{mazon-TV}, we begin with the observation that the variational inequality~\eqref{var-ineq-TV-slicewise}
   implies $\partial_tu\in \Lw\infty(0,T; W^{-1,\infty}(\Omega))$ with 
   \begin{equation}\label{dtu-dual-est}
     \esssup_{t\in(0,T)}\|\partial_tu(t)\|_{W^{-1,\infty}(\Omega)}\le 1.
   \end{equation}
   In order to prove this claim, we consider an arbitrary $\varphi\in
   L^1(0,T;W^{1,1}_0(\Omega))$ and use $v=u(t)-\varphi(t)$
   as a comparison function in
   the variational inequality~\eqref{var-ineq-TV-slicewise}. After integrating
   over $t\in(0,T)$, we obtain the bound 
   \begin{align*}
     \iint_{\Omega_T}\partial_tu\varphi\dxt
     &\le
     \int_0^T\Big(\big\|Du(t)-D\varphi(t)\big\|(\overline\Omega)-\|Du(t)\|(\overline\Omega)\Big)\dt\\
     &\le
     \iint_{\Omega_T}|\nabla\varphi|\dxt.
   \end{align*}
   The preceding estimate implies 
   $$
     \partial_tu\in \big[L^1(0,T;W^{1,1}_0(\Omega))\big]'=
     \Lw\infty(0,T;W^{-1,\infty}(\Omega)),
   $$
   together with~\eqref{dtu-dual-est}. 
   Next, we note that the variational inequality
   \eqref{var-ineq-TV-slicewise} corresponds to a minimization
   property of 
   $u(t)\in\BV_{u_o(t)}(\Omega)\cap
   L^2(\Omega^\ast)$. More precisely, for a.e. $t\in(0,T)$, the
   function $u(t)$ is a minimizer of the functional
   \begin{equation*}
     \Psi(v)=\|Dv\|(\overline\Omega)+\int_{\Omega\times\{t\}}\partial_tu(v-u_o)\dx
   \end{equation*}
   in the space $\BV_{u_o(t)}(\Omega)\cap L^2(\Omega^\ast)$.
   In view of~\eqref{dtu-dual-est}, Theorem~\ref{thm:TV-duality}
   is applicable with the choice
   $f=\partial_tu(t)\in W^{-1,\infty}(\Omega)\cap L^2(\Omega)$, for
   a.e. $t\in(0,T)$.  
   Theorem~\ref{thm:TV-duality} implies
  \begin{equation}\label{duality-slicewise}
    \|Du(t)\|(\overline\Omega)
      +\int_{\Omega\times\{t\}}\partial_tu(u-u_o)\dx
    =
    \max_{\tilde z\in S^\infty_{\partial_tu(t)}(\Omega)}\,
     \int_{\Omega\times\{t\}} \tilde z\cdot\nabla u_o\dx
  \end{equation}
  for a.e. $t\in(0,T)$. Our next goal is to show that there is a
  vector field $z\in L^\infty(\Omega_T,\R^n)$ such that $z(t)\in
  L^\infty(\Omega,\R^n)$ 
  realizes the maximum in~\eqref{duality-slicewise} for a.e. $t\in(0,T)$. 
  By definition of the maximum, for a.e. $t\in(0,T)$ we can choose    
  a vector field $z_\ast(t)\in L^\infty(\Omega,\R^n)$ with
   $\|z_\ast(t)\|_{L^\infty(\Omega)}\le1$ and 
   $\Div z_\ast(t)=\partial_tu(t)$ in $\Omega$, such that 
   \begin{equation}
     \label{choice-z}
     \int_{\Omega\times\{t\}} z_\ast\cdot\nabla u_o\dx
     =
     \max_{\tilde z\in S^\infty_{\partial_tu(t)}(\Omega)}\,
     \int_{\Omega\times\{t\}} \tilde z\cdot\nabla u_o\dx.
   \end{equation}
   However, at this stage we can not rule out the possibility that
   $t\mapsto z_\ast(t)$ is not measurable.
   In this case, we need to replace $z_\ast$
   by a measurable vector field $z\in L^\infty(\Omega_T,\R^n)$. To
   show the existence of such a vector field, 
   we identify elements in $L^\infty(\Omega_T,\R^n)$ with bounded linear
   functionals on $L^1(\Omega_T,\R^n)$.
   Let us consider the subspace
   \begin{equation*}
     \mathcal{W}=\big\{ r\nabla u_o+\nabla\varphi\ \colon\,
     r\in\R,\ 
     \varphi\in C^\infty_0(\Omega_T)\big\}\subset L^1(\Omega_T,\R^n).
   \end{equation*}
   For a given element $V\in\mathcal{W}$, we choose $r\in\R$ and
   $\varphi\in C^\infty_0(\Omega_T)$ with $V=r\nabla
   u_o+\nabla\varphi$. For a.e. $t\in(0,T)$, we have
   \begin{align*}
     \int_{\Omega\times\{t\}} z_\ast\cdot V\dx
     &=
       r\int_{\Omega\times\{t\}} z_\ast\cdot \nabla u_o\dx
       +
       \int_{\Omega\times\{t\}} z_\ast\cdot \nabla\varphi\dx
       \\
     &=
       r\max_{\tilde z\in S^\infty_{\partial_tu(t)}(\Omega)}\,
       \int_{\Omega\times\{t\}} \tilde z\cdot\nabla u_o\dx
       -
       \int_{\Omega\times\{t\}} \partial_tu\,\varphi\dx.
   \end{align*}
   For the last identity, we used the choice of $z_\ast(t)$ according
   to \eqref{choice-z} and the fact that $\Div z_\ast(t)=\partial_tu(t)$.
   We observe that the maximum in the last line
   depends measurably on time because it coincides with a measurable
   function by~\eqref{duality-slicewise},
   cf. Lemma~\ref{lem:TV-measurable}.
   We conclude that the
   left-hand side of the preceding formula depends measurably on time
   as well. Moreover, because of $\|z_\ast(t)\|_{L^\infty}\le 1$
   for a.e. $t\in(0,T)$, we have
   \begin{equation}\label{bound-z-star-slicewise}
     \bigg|\int_{\Omega\times\{t\}} z_\ast\cdot V\dx\bigg|
     \le
     \int_{\Omega\times\{t\}}|V|\dx
   \end{equation}
   for a.e. $t\in(0,T)$. 
   Therefore, we may define a linear functional 
   \begin{equation*}
     \ell: \mathcal{W}\to\R,
     \quad
     V\mapsto \int_0^T\bigg(\int_{\Omega\times\{t\}}z_\ast\cdot V\dx\bigg)\dt.
   \end{equation*}
   Integrating estimate~\eqref{bound-z-star-slicewise} over time, 
   we infer the bound
   \begin{equation*}
     |\ell(V)|\le \|V\|_{L^1(\Omega_T)}
     \quad\mbox{for every $V\in\mathcal{W}$.}
   \end{equation*}
   This means that $\ell$ is a bounded linear functional on
   $\mathcal{W}$ with
   $\|\ell\|_{\mathcal{W}'}\le1$. By the Hahn-Banach theorem, there
   exists an extension $L\in [L^1(\Omega_T,\R^n)]'$ with
   $L|_{\mathcal{W}}=\ell$ and
   \begin{equation*}
     \|L\|_{[L^1(\Omega_T,\R^n)]'}=\|\ell\|_{\mathcal{W}'}\le 1.
   \end{equation*}
   The Riesz representation theorem yields a vector field $z\in
   L^\infty(\Omega_T,\R^n)$ with
   \begin{equation}\label{bound-z-one}
     \|z\|_{L^\infty(\Omega_T)}=\|L\|_{[L^1(\Omega,\R^n)]'}\le1
   \end{equation}
    and 
   \begin{equation}\label{Riesz}
     \iint_{\Omega_T}z\cdot V\dxt
     =
     L(V)
     =
     \int_0^T\bigg(\int_{\Omega\times\{t\}}z_\ast\cdot V\dx\bigg)\dt
   \end{equation}
   for every $V\in\mathcal{W}$. We exploit this identity in two
   ways. First, we choose $V=\nabla\varphi\in\mathcal{W}$, where
   $\varphi\in C^\infty_0(\Omega_T)$, and deduce
   \begin{equation*}
     \iint_{\Omega_T}z\cdot\nabla\varphi\dxt
     =
     \int_0^T\bigg(\int_{\Omega\times\{t\}}z_\ast\cdot \nabla\varphi\dx\bigg)\dt
     =
     -\iint_{\Omega_T}\partial_tu \,\varphi\dxt,
   \end{equation*}
   which means $\Div z=\partial_tu$ in $\Omega_T$, in the sense of
   distributions. In view of~\eqref{bound-z-one}, we infer $z(t)\in
   S^\infty_{\partial_tu(t)}(\Omega)$ for a.e. $t\in(0,T)$.
   Second, the choice $V=\nabla u_o\in\mathcal{W}$
   in~\eqref{Riesz} implies 
   \begin{align*}
     \iint_{\Omega_T}z\cdot \nabla u_o\dxt
     &=
       \int_0^T\bigg(\int_{\Omega\times\{t\}}z_\ast\cdot \nabla
       u_o\dx\bigg)\dt\\
     &=
       \int_0^T\bigg(\max_{\tilde z\in S^\infty_{\partial_tu(t)}(\Omega)}\,
       \int_{\Omega\times\{t\}} \tilde z\cdot\nabla u_o\dx\bigg)\dt\\
     &\ge
       \iint_{\Omega_T}z\cdot \nabla u_o\dxt,
   \end{align*}
   where we used~\eqref{choice-z} and the fact $z(t)\in
   S^\infty_{\partial_tu(t)}(\Omega)$ for a.e. $t\in(0,T)$. 
   We deduce that the last inequality must be an identity, which implies
   \begin{align*}
     \int_{\Omega\times\{t\}}z\cdot \nabla u_o\dx
     =
     \max_{\tilde z\in S^\infty_{\partial_tu(t)}(\Omega)}\,
       \int_{\Omega\times\{t\}} \tilde z\cdot\nabla u_o\dx
   \end{align*}
   for a.e. $t\in(0,T)$. Hence, we have found the desired vector field
   $z\in L^\infty(\Omega_T,\R^n)$ such that $z(t)\in
   S^\infty_{\partial_tu(t)}(\Omega)$ realizes the
   maximum in~\eqref{duality-slicewise} for
   a.e. $t\in(0,T)$. Equation~\eqref{duality-slicewise} implies the identity
   \begin{align*}
     \|Du(t)\|(\overline\Omega)
     +\int_{\Omega\times\{t\}}\partial_tu(u-u_o)\dx
     =
     \int_{\Omega\times\{t\}} z\cdot\nabla u_o\dx
   \end{align*}
   for a.e. $t\in(0,T)$.
   For the proof of~\eqref{mazon-TV},
   it remains to replace $u_o$ by an arbitrary 
   $v\in\BV_{u_o(t)}(\Omega)\cap L^2(\Omega^\ast)$ in the preceding identity.
   This can be done with the help of
   Lemma~\ref{lem:comparison-pairings}. Since
   $\Div z(t)=\partial_tu(t)$, Lemma~\ref{lem:comparison-pairings} implies 
  \begin{equation*}
    \int_{\Omega\times\{t\}} z\cdot\nabla u_o\dx
    =
    (z(t),Dv)_{u_o}(\overline\Omega)
    +
    \int_{\Omega\times\{t\}} \partial_tu(v-u_o)\dx.
  \end{equation*}
  Combining the two preceding identities, we arrive at 
  \begin{align*}
    \|Du(t)\|(\overline\Omega)
    +\int_{\Omega\times\{t\}}\partial_tu(u-v)\dx
    =
    (z(t),Dv)_{u_o}(\overline\Omega)  
  \end{align*}
  for a.e. $t\in(0,T)$ and every 
  $v\in\BV_{u_o(t)}(\Omega)\cap L^2(\Omega^\ast)$, which is
  the assertion~\eqref{mazon-TV}.
  Therefore, the function $u$ is a weak solution
  of~\eqref{Cauchy-Dirichlet} in the sense of Definition~\ref{def:Mazon-solution}. This completes
  the proof of Theorem~\ref{thm:main}.

\noindent
Juha Kinnunen,
Aalto University, Department of Mathematics,
P.O. Box 11100, FI-00076 Aalto, Finland.
Email: juha.k.kinnunen@aalto.fi

\medskip\noindent
Christoph Scheven, Fakult\"at f\"ur Mathematik, 
Universit\"at Duisburg-Essen, 45117 Essen, Germany.
Email: christoph.scheven@uni-due.de

\medskip\noindent
KEY WORDS AND PHRASES: Parabolic variational integral, total variation flow, Cauchy-Dirichlet problem.

\medskip\noindent
2010 MATHEMATICS SUBJECT CLASSIFICATION: 35K67, 35K20, 35K92.

\end{document}